\title{\textsc{\textbf{{
    Harmonic spinors in the Ricci flow
}}}}
\author{\textsc{
    Julius Baldauf\thanks{
    Supported in part by the National Science Foundation. {\it E-mail}: \texttt{juliusbl@mit.edu}
}}
\vspace{0.2cm}\\
    \textsc{\footnotesize MIT Department of Mathematics}\vspace{-0.1cm}\\
    \textsc{\footnotesize Cambridge, MA}
    \vspace{-0.05cm}
}
\date{}
\renewcommand\th@plain{\slshape}
\xpatchcmd{\proof}{\itshape}{\slshape}{}{}
\renewcommand\th@plain{\slshape}
\titleformat*{\section}{\centering\large\scshape\sffamily}
\titleformat{\subsection}[runin]
  {\normalfont\bfseries}{\thesubsection.}{0.6em}{}
\titleformat{\subsubsection}[runin]
  {\normalfont\bfseries}{\thesubsubsection.}{0.6em}{}
\numberwithin{equation}{section}
\theoremstyle{plain} 
\newtheorem{lemma}[equation]{Lemma}
\newtheorem{proposition}[equation]{Proposition}
\newtheorem{theorem}[equation]{Theorem}
\newtheorem{corollary}[equation]{Corollary}
\theoremstyle{definition}
\newtheorem{remark}[equation]{Remark}
\newtheorem{example}[equation]{Example}
\newcommand{\R}{\mathbb{R}}
\newcommand{\Z}{\mathbb{Z}}
\newcommand{\C}{\mathbb{C}}
\newcommand{\id}{\mathbbm{1}}
\renewcommand{\ker}[1]{\mathrm{ker} \left( #1 \right)}
\renewcommand{\tilde}{\widetilde}
\newcommand{\Rm}{\mathrm{Rm}}
\newcommand{\Ric}{\mathrm{Ric}}
\newcommand{\trRic}{\Ric^{\circ}}
\newcommand{\Scal}{\mathrm{R}}
\newcommand{\minScal}{\Scal_{\mathrm{min}}}
\newcommand{\avgScal}{\bar{\Scal}}
\newcommand{\Vol}{\mathrm{Vol}}
\renewcommand{\phi}{\varphi}
\renewcommand{\epsilon}{\varepsilon}
\newcommand{\CP}{\mathbb{CP}}
\newcommand{\Rea}{\,\mathrm{Re}\,}
\newcommand{\Hom}{\mathrm{Hom}}
\newcommand{\End}{\mathrm{End}}
\newcommand{\Hess}{\mathrm{Hess}}
\newcommand{\dL}{\Delta}
\newcommand{\ind}{\mathrm{ind}}
\newcommand{\SO}{\mathrm{SO}}
\newcommand{\Un}{\mathrm{U}}
\newcommand{\SU}{\mathrm{SU}}
\newcommand{\Spin}{\mathrm{Spin}}
\begin{document}

\maketitle
%%%%%%%%%%%%%%%%%%%%%%%%%%%%%%%%%%
\begin{abstract}
This paper studies the Ricci flow on closed manifolds admitting harmonic spinors. 
It is shown that Perelman's Ricci flow entropy can be expressed in terms of the energy of harmonic spinors in all dimensions, and in four dimensions, in terms of the energy of Seiberg-Witten monopoles. 
Consequently, Ricci flow is the gradient flow of these energies.
The proof relies on a weighted version of the monopole equations, introduced here.
Further, a sharp parabolic Hitchin-Thorpe inequality for simply-connected, spin 4-manifolds is proven.
From this, it follows that the normalized Ricci flow on any exotic K3 surface must become singular.
\end{abstract}
%%%%%%%%%%%%%%%%%%%%%%%%%%%%%%%%%%
%%%%%%%%%%%%%%%%%%%%%%%%%%%%%%%%%%
\section{Introduction}

Ricci flow is an evolution equation that improves the geometry of a manifold in time \cite{Ha82}. In particular, the flow retains symmetries. 
This property forms the foundation of the theory of K\"ahler-Ricci flow, which has a variety of special properties not enjoyed by Ricci flow on general manifolds.
The K\"ahler structure allows for the use of complex-geometric tools in the analysis of singularities, which has led to a fruitful theory of K\"ahler-Ricci flow, particularly in low dimensions.

Similarly, on spin manifolds, Ricci flow preserves the spin structure, allowing for the use of spin-geometric tools to analyze the flow.
However, while the existence of a K\"ahler structure is a strong topological condition, spin structures exist in abundance: all low-dimensional manifolds admit them. 
Thus, the tools of spin geometry can be applied to all Ricci flows in low dimensions.
The study of spinors has proven to be of fundamental importance in the understanding of static manifolds, and this paper demonstrates that they must also play a fundamental role in the Ricci flow on low-dimensional manifolds, particularly of dimension four. Indeed, Ricci flow can be \textit{defined} as the gradient flow of the energy of a harmonic spinor, and in four dimensions, such spinors are guaranteed to exist on the most interesting manifolds -- those admitting exotic smooth structures.

The gradient flow structure of Ricci flow on general (not necessarily spin) manifolds was discovered by Perelman \cite{P02}. 
The entropy he discovered, now called Perelman's $\lambda$-entropy, can be interpreted as a weighted Hilbert-Einstein energy and is defined not only on the space of metrics, but on the space of pairs $(g,f)$ consisting of a Riemannian metric $g$ and a smooth function $f$ defining the measure $e^{-f}dV$.
While the Hilbert-Einstein functional is defined as the total scalar curvature, Perelman's $\lambda$-entropy minimizes the total weighted scalar curvature
\begin{equation}\label{eqn: weighted scalar}
    \lambda(g)=\inf_f\int_M\Scal_f\,e^{-f}dV, 
    \qquad \qquad 
    \Scal_f = \Scal +2\Delta f -|\nabla f|^2.
\end{equation}
The infimum is taken over all functions $f$ such that $\int_Me^{-f}dV_g=1$, and is achieved by the unique function for which the weighted scalar curvature $\Scal_f$ is constant equal to $\lambda(g)$. 

\subsection{Harmonic spinors}
Remarkably, Perelman's $\lambda$-entropy can also be interpreted as the first eigenvalue of the Schr\"odinger operator $-4\Delta +\Scal$.
It is the latter definition which reveals the relationship to spin geometry. 
To describe this relationship, the central object needed is the weighted Dirac operator determined by a weighted spin manifold $(M,g,e^{-f}dV)$.
This operator is defined as a conjugation of the standard Dirac operator by $D_f=e^{f/2}De^{-f/2}$.
The relationship to the weighted scalar curvature is then established via the associated Schr\"odinger-Lichnerowicz formula
\begin{equation}
    D_f^2 = -\Delta_f +\frac{1}{4}\Scal_f,
\end{equation}
where $\Delta_f$ denotes the weighted Laplacian.
First observed by Perelman, this formula motivated his introduction of the $\lambda$-entropy \cite[1.3]{P02}.
The formula is of particular interest when $\Scal_f$ is constant, which can always be achieved by choosing $f$ to be the minimizer of the $\lambda$-entropy. 
In this case, applied to a spinor $\psi$ lying in the kernel of $D_f$ (which is nonempty if the $\hat{A}$-genus is nontrivial) implies the following formula for the $\lambda$-entropy (Theorem \ref{thm: lambda-entropy in terms of harmonic spinor})
\begin{equation}\label{eqn: lambda-entropy in terms of harmonic spinor intro}
    \lambda(g)=-4\frac{\int_M |\nabla \psi|^2 \,e^{-f}dV}{\int_M|\psi|^2 e^{-f}dV}.
\end{equation}
Along the Ricci flow, Perelman's monotonicity formula therefore implies that the above energy of the spinor is monotone decreasing in time.
It follows immediately from (\ref{eqn: lambda-entropy in terms of harmonic spinor intro}) that manifolds admitting a parallel spinor and with nontrivial Dirac index, for example the K3 surface, are linearly stable \cite{CHI05, DWW05}; moreover, it strengthens the picture that a Ricci flow starting near such a metric exists for all time and the harmonic spinors converge to nontrivial parallel spinors \cite{Se06}.
The monotonicity of (\ref{eqn: lambda-entropy in terms of harmonic spinor intro}) along Ricci flow may also be compared to the spinorial gradient flow introduced in \cite{AWW16}.

\subsection{Monopoles}
The second part of this paper concerns the Ricci flow on 4-manifolds with nontrivial Seiberg-Witten invariant.
An orientable manifold of dimension four does not always admit a spin structure, but it always admits spin-c structures.
To define the Dirac operator on a spin-c manifold, a choice of an auxiliary connection on an associated line bundle is necessary.
The Seiberg-Witten monopole equations determine a special class of such connections \cite{W94}:
this nonlinear elliptic system couples a spinor and a $U(1)$-connection, requiring that the self-dual part of the curvature of the connection is quadratic in the spinor, and that the spinor is harmonic with respect to the Dirac operator defined by said connection.
Solutions of the monopole equations always exist under suitable (smooth) topological conditions, such as the existence of a symplectic structure \cite{T00}.

This paper introduces a weighted version of the monopole equations, which is naturally adapted to the Ricci flow. 
Given a weighted spin-c 4-manifold $(M^4,g,f)$, equipped with a $U(1)$-connection $A$, the associated weighted Dirac operator is defined as a conjugation of the standard spin-c Dirac operator by $D_{A,f}=e^{f/2}D_Ae^{-f/2}$.
In particular, if $(A,\phi)$ is a solution of the monopole equations, then the pair $(A, \psi)$, with $\psi=e^{f/2}\phi$, solves the weighted monopole equations
\begin{align}
        F_A^+\cdot &= e^{-f}(\psi \otimes \psi^*)^{\circ} \\
        D_{A,f}\psi&=0. \nonumber
\end{align}
In the context of Ricci flow, these weighted equations are of particular interest when the weight $f$ is chosen such that $\Scal_f$ is constant.
The associated weighted Schr\"odinger-Lichnerowicz formula then implies the following formula for the $\lambda$-entropy (Theorem \ref{thm: lambda-entropy in terms of monopole})
\begin{equation}
    \lambda(g)
    =-\frac{\int_M\left(4|\nabla^A \psi|^2+e^{-f}|\psi|^4\right)\,e^{-f}dV}{\int_M|\psi|^2\,e^{-f}dV}.
\end{equation}
The process of choosing a weight function $f$ to achieve constant weighted scalar curvature should be viewed as an analogue of conformally rescaling the metric to achieve constant scalar curvature. However, the advantage of modifying the weight over conformally changing the metric is that the geometry remains unchanged by the former transformation. 

The above formula implies a parabolic version of LeBrun's scalar curvature estimate \cite{L95}: if $(M^4,g)$ is a spin-c 4-manifold admitting a monopole class $\mathfrak{s}$ such that $c_1^2(\mathfrak{s})>0$, and if $f$ is the minimizer of Perelman's $\lambda$-entropy normalized so that $\int_M e^{-f}dV=\Vol(M)$, then (Corollary \ref{cor: weighted LeBrun estimate}) 
\begin{equation}\label{eqn: weighted L2 intro}
    \int_M\Scal_f^2\,e^{-f}dV
    \geq 32\pi^2 c_1^2(\mathfrak{s}).
\end{equation}
Equality holds if and only if the manifold is K\"ahler-Einstein with negative scalar curvature.
Moreover, the weighted $L^2$-norm of the weighted scalar curvature is monotone decreasing along Ricci flow, being constant if and only if the flow is Einstein.
In particular, the above estimate improves in time along the Ricci flow.
LeBrun's original estimate is a foundational result in 4-dimensional Riemannian geometry because it provides the only known obstructions to the existence of Einstein metrics \cite{L96}, other than the Hitchin-Thorpe inequality. 
The estimate (\ref{eqn: weighted L2 intro}) has also been proven via different means in \cite{FZ07}, and was used to prove existence of singularities for normalized Ricci flows \cite{FZZ08-1, I08}.

\subsection{Singularities}
It is a fundamental open problem to determine when two homeomorphic 4-manifolds are diffeomorphic.
The monopole equations are the primary tool used to \textit{distinguish} homeomorphic manifolds via the associated Seiberg-Witten invariant.
On the other hand, showing that two homeomorphic manifolds are \textit{diffeomorphic} requires very different tools. 
In three dimensions, Hamilton-Perelman's Ricci flow with surgery proved to be the key tool to answer questions of this sort.
Because of the fundamental nature of monopoles in four dimensions, Ricci flow will need to account for these objects if it is to play a role in the classification of smooth structures.

As a first step towards understanding exotic smooth structures via Ricci flow, this paper shows that singularities of the normalized Ricci flow must always exist on exotic K3 surfaces -- manifolds which are homeomorphic, but not diffeomorphic, to the K3 surface.
Existence of Ricci flow singularities on certain exotic 4-manifolds was first proven by Ishida \cite{I08}, though the manifolds he considered are topologically more complex than the K3 surface.
Indeed, the topological K3 manifold is the simplest 4-manifold after $S^4$, $\CP^2$, and $S^2\times S^2$, having the minimal second Betti number among all closed, simply-connected, spin 4-manifolds with nonzero signature \cite{Do86}.
Further, many exotic smooth structures on K3 have explicit descriptions \cite{FS98}, making them amenable to study via Ricci flow.

The existence of singularities of the normalized Ricci flow on exotic K3 surfaces is a consequence of a sharp parabolic Hitchin-Thorpe inequality (Corollary \ref{cor: parabolic HT}):
if a closed, simply-connected, spin 4-manifold $M$ admits a nonsingular solution $g(t)$ of the normalized Ricci flow, then
\begin{equation}\label{eqn: Hitchin-Thorpe intro}
    2\chi\geq 3|\sigma|,
\end{equation}
where $\chi$ denotes the Euler characteristic, and $\sigma$ the signature of $M$. 
Moreover, equality holds in (\ref{eqn: Hitchin-Thorpe intro}) if and only if $M$ is diffeomorphic to K3 and $g(t)$ converges subsequentially in the smooth Cheeger-Gromov sense to a hyperk\"ahler metric.
In particular, if equality holds, then $M$ is diffeomorphic to the standard K3, and consequently, any normalized Ricci flow on an exotic K3 surface must become singular. 
Said corollary follows by combining results of \cite{FZZ08-1} with the characterization of equality proved here (Theorem \ref{thm: parabolic Hitchin rigidity}).
This characterization of equality is a parabolic version of Hitchin's \cite{Hi74b} rigidity result for Einstein 4-manifolds, which states that the K3 surface with a hyperk\"ahler metric is the unique closed, simply-connected, Einstein 4-manifold satisfying equality in (\ref{eqn: Hitchin-Thorpe intro}).

%%%%%%%%%%%%%%%%%%%%%%%%%%%%%%%%%%
\section*{\small\bf Acknowledgements}
The author thanks W. Minicozzi for continual support, 
C. Taubes for suggesting the study of Ricci flow on spin manifolds,
and T.\ Collins, H.\ J. Hein, T.\ Ilmanen, and T.\ Ozuch for useful discussions. 
The author is a National Science Foundation Graduate Research Fellow.
% \newpage
\section{Preliminaries}

The spin bundle, and hence the Dirac operator, depends on a choice of Riemannian metric. For two choices of Riemannian metrics, the spin bundles are isomorphic, though in general not canonically so. 
However, given a 1-parameter family of Riemannian metrics, there does exist a natural identification of the spin bundles at different times, obtained via the generalized cylinder construction.
In the context of Ricci flow, this identification of the spin bundles is explained in \cite{BO2} and will be used implicitly in what follows.
This paper also employs tools from the theory of spin geometry on weighted manifolds, developed in \cite[\S 1]{BO1}. 
Because they are central to the arguments that follow, the relevant facts are reviewed below. 

\subsection{Weighted spin geometry}\label{sec: Weighted Dirac operator}

A spin structure on an oriented Riemannian manifold $(M^n,g)$ of dimension $n\geq 3$ is a two-fold covering $\tilde{P}\to P$ of the principal $\SO(n)$-bundle $P\to M$ of oriented, orthonormal frames by a principal $\Spin(n)$-bundle $\tilde{P}\to M$ restricting to the the universal covering map $\Spin(n)\to \SO(n)$ over each fiber. 
When $n=2$, the definition differs from higher dimensions because $\Un_1\cong \SO(2)\cong \Spin(2)$; in this case, a spin structure is defined as a two-fold covering $\tilde{P}\to P$ restricting to the squaring map $z\mapsto z^2$ over each fiber.
Spin structures are in general not unique; they are indexed by $H^1(M;\Z_2)$, which acts freely and transitively on the set of spin structures. 

The (complex) spin bundle $\Sigma M\to M$ is a complex vector bundle on $M$ of rank $2^{\lfloor \frac{n}{2}\rfloor}$ equipped with a Hermitian metric $\langle \cdot ,\cdot\rangle$ and a compatible connection $\nabla$. It is defined as the associated vector bundle $\Sigma M = \tilde{P} \times_{\rho_n} \Sigma_n$, where $\rho_n:\Spin(n)\to \End(\Sigma_n)$ is the standard spin representation.
A vector field on $M$ induces an endomorphism of the spin bundle via Clifford multiplication, denoted ``$\cdot$''. This endomorphism satisfies, for all vector fields $X,Y$, the Clifford algebra identity
\begin{equation}\label{eqn: Clifford algebra identity}
    X\cdot Y\cdot +\;Y\cdot X\cdot = -2\langle X,Y\rangle\id.
\end{equation}
Furthermore, Clifford multiplication by unit vectors is unitary for the spin metric, and is covariantly constant with respect to the spin connection. See \cite{BHM+} for background on spin geometry.

A weighted spin manifold is a spin manifold $(M^n,g)$ equipped with a function $f:M\to \R$ defining the weighted measure $e^{-f}dV$.
The weighted Dirac operator $D_f:\Gamma(\Sigma M)\to \Gamma(\Sigma M)$ of a weighted spin manifold is defined as 
\begin{equation}\label{eqn: weighted Dirac}
    D_f
    = e^{f/2}De^{-f/2}
    = D-\frac{1}{2}(\nabla f) \cdot \;,
\end{equation}
where $D=e_i\cdot \nabla_i$ is the standard Dirac operator, namely the composition of the spin covariant derivative $\nabla$ with Clifford multiplication. 
(Throughout this paper, 1-forms and vector fields are often identified via the metric without explicit mention.)

On closed manifolds, the weighted Dirac operator satisfies the integration by parts formula
\begin{equation}\label{eqn: weighted Laplcian IBP}
    \int_M\langle \psi,D_f\phi\rangle e^{-f}\,dV
    =\int_M \langle D_f \psi,\phi\rangle e^{-f}\,dV,
\end{equation}
and hence is self-adjoint with respect to the $L^2_f =L^2(e^{-f}\,dV)$-inner product. 
Note that, by definition, $D_f$ is unitarily equivalent to $D$ via the map $L^2\to L^2_f$ defined by $\psi\mapsto e^{f/2}\psi$.

Furthermore, the weighted Dirac operator satisfies a weighted Schr\"odinger-Lichnerowicz formula, which was observed by Perelman \cite{P02}. To state it, let
$
% \begin{equation}\label{eqn: defn of weighted Laplacian}
    \dL_f=\Delta  -\nabla_{\nabla f}
% \end{equation} 
$
be the weighted Laplacian and
\begin{equation}\label{eqn: defn of weighted scalar curvature}
    \Scal_f=\Scal+2\Delta_f f+|\nabla f|^2
\end{equation}
be Perelman's weighted scalar curvature (or P-scalar curvature).
Then the square of $D_f$ satisfies
\begin{equation}\label{eqn: weighted Schr\"odinger-Lichnerowicz}
    D_f^2=-\dL_f+\frac{1}{4}\Scal_f.
\end{equation}
Combined with the self-adjointness of $D_f$ on the weighted $L^2$ space, this implies the crucial formula
\begin{equation}\label{eqn: weighted integration by parts}
    \frac{1}{4}\int_M\Scal_f|\psi|^2\,e^{-f}dV 
    =\int_M\left( |D_f\psi|^2-|\nabla \psi|^2\right) e^{-f}dV.
\end{equation}

Additionally, the weighted (Bakry-\'Emery) Ricci curvature $\Ric_f=\Ric+\Hess_f$ is proportional to the commutator of $D_f$ and $\nabla$: for any vector field $X$ and spinor $\psi$, the following weighted Ricci identity holds
\begin{equation}\label{eqn: weighted Ricci identity}
    [D_f,\nabla_X]\psi=\frac{1}{2}\Ric_f(X)\cdot\psi.
\end{equation}
The reader is referred to \cite[\S 1]{BO1} for a comprehensive exposition of weighted spin geometry.

\subsection{Weighted harmonic spinors}
On a weighted spin manifold $(M^n,g,f)$, a weighted harmonic spinor is a spinor lying in the kernel of the weighted Dirac operator; that is, satisfying the equation
\begin{equation}
    D_f\psi =0.
\end{equation}
This equation is more natural than the usual Dirac equation on weighted manifold and is a first-order, linear, elliptic equation. 
Due to the unitary equivalence of $D$ and $D_f$, the map $\phi\mapsto e^{f/2}\phi$ is an isomorphism of the vector spaces $\ker{D}$ and $\ker{D_f}$. 

The most powerful existence theorem for harmonic spinors on closed manifolds is the Atiyah-Singer index theorem (see \cite{LM89}, for example).
The theorem applies to even-dimensional manifolds, for which the spin bundle decomposes as a direct sum $\Sigma =\Sigma^+\oplus \Sigma^-$ into the $\pm 1$-eigenspaces of the complex volume element $\omega_{\C}\in \Gamma (\End(\Sigma))$, defined locally with respect to an orthonormal frame by $\omega_{\C}=i^{\lfloor\frac{n+1}{2}\rfloor}e_1\cdots e_n\cdot$.
The weighted Dirac operator interchanges the two summands $\Sigma^{\pm}$. 
Its restriction to $\Sigma^{+}$, denoted $D_f^+$, is a zeroth-order perturbation of $D^+$, hence the indices of these two operators coincide.
In particular, it follows from the the Atiyah-Singer index theorem that the index of $D^+_f$ is equal to a topological invariant of $M$, the $\hat{A}$-genus, which is determined by the Pontryagin numbers of $M$.
In four dimensions, the index theorem takes on a particularly simple form: if $\sigma$ is the signature and $p_1$ the first Pontryagin number of $M^4$, then
\begin{equation}\label{eqn: index theorem}
    \ind(D^+_f)
    =\hat{A}
    =-\frac{1}{8}\sigma
    =-\frac{1}{24}p_1.
\end{equation}
In particular, if $\sigma\neq 0$, then there exist weighted harmonic spinors on $M$ for any choice of Riemannian metric and any choice of weight function $f$. 
This is the case, for example, on manifolds homeomorphic to connected sums of copies of the K3 surface and $S^2\times S^2$.

Although the index of $D_f$ is independent of the metric and the weight $f$, the dimension of the kernel of $D_f$ in general depends upon the metric. In particular, this dimension may change along the Ricci flow. 
The following example demonstrates this on the Berger spheres.

\begin{example}
[Ricci flow on Berger spheres]
Hitchin \cite{Hi74} computed the dimension of the space of harmonic spinors on certain manifolds with symmetries, and most notably, on the Berger spheres. 
These constitute a 1-parameter family of left-invariant Riemannian metrics on $S^3$. 
Geometrically, this family can be obtained from the round metric by shrinking along fibers of the Hopf fibration.
The standard metric on $S^3$ is very special if $S^3$ is regarded as a compact Lie group (($\SU(2)$, $\Spin(3)$, or $\mathrm{Sp}(1)$): it corresponds to the \textit{bi}-invariant metric.
If $X,Y$ are left-invariant vector fields and $g$ is a left-invariant metric, then $g_p(X_p,Y_p)=g_e(X_e,Y_e)$ for any $p\in S^3$, where $e$ denotes the identity.
Since left-invariant vector fields span the tangent space at every point $p$, a left-invariant metric is defined by a metric on the tangent space at the identity, that is, the Lie algebra.

Hence, relative to a basis $\{e_1,e_2,e_3\}$ of the Lie algebra which is orthonormal with respect to the bi-invariant metric, a left-invariant metric is diagonal and determined by its eigenvalues $\kappa_1,\kappa_2,\kappa_3>0$.
The Berger metrics correspond to left-invariant metrics which are right-invariant under $S^1\subset S^3$, when $S^3$ is considered as the total space of the Hopf fibration. This is equivalent (up to scaling), to the case $1=\kappa_2=\kappa_3$. Set $\kappa_1=\kappa^2$ and denote the corresponding metric
\begin{equation}
    g_{\kappa}= 
        \begin{bmatrix}
        \kappa^2 & & \\
        & 1 & \\
        & & 1
        \end{bmatrix}.
\end{equation}

The tangent bundle is parallelized by a basis for the Lie algebra and so the spin bundle is parallelized by the corresponding spinor basis.
Thus, relative to a left-invariant metric, the Dirac operator is a $2\times 2$ matrix of first-order, linear, left-invariant differential operators, that is, elements of the Lie algebra, and constants which are determined by $\kappa$. Using this, Hitchin \cite[\S3]{Hi74} showed that the dimension of kernel of the Dirac operator (the number of linearly independent harmonic spinors) is equal to the number of positive integer solutions $(p,q)$ to the equation
\begin{equation}\label{eqn: dim ker D on Berger}
    \kappa^2=2\sqrt{4pq\kappa^2+(p-q)^2}.
\end{equation}
In particular, if $m>0$ is an integer and $\kappa=4m$, then there are at least $2m$ linearly independent harmonic spinors.
On the other hand, Lichnerowicz's vanishing theorem implies that there exist no harmonic spinors if the scalar curvature is positive. Straightforward computation \cite[\S3.3]{Hi74} shows that the scalar curvature of the metric $g_{\kappa}$ is determined by $\kappa$ as follows
\begin{equation}\label{eqn: Scal of Berger}
    \Scal = 2(4-\kappa^2).
\end{equation}

Consider the volume-normalized Ricci flow $g(t)$ starting at the Berger metric $g_{\kappa}$, where $\kappa\geq 16$, so that the metric admits harmonic spinors and $\Scal<0$. 
Because the Ricci flow preserves the isometry group, up to scaling, the metric remains in the form $g_{\kappa(t)}$, and $\kappa(t)$ evolves according to an ODE derived from the Ricci flow equation.
Ricci flow tends to increase the minimum of scalar curvature, which, due to (\ref{eqn: Scal of Berger}), means $\kappa(t)$ is decreasing.
In fact, ODE analysis \cite[\S3(b)]{IJ92} reveals that the volume-normalized Ricci flow starting at $g_{\kappa}$ exists for all time and converges to the round metric exponentially in time as $t\to \infty$.
In particular, there exists a time $t_0>0$ such that the Berger metric $g_{\kappa(t_0)}$ has positive scalar curvature, and hence no harmonic spinors.
This shows that the dimension of the kernel of the Dirac operator can decrease in time along a volume-normalized Ricci flow.
Because the volume-normalized Ricci flow differs from the regular Ricci flow $\partial_tg=-2\Ric$ only by a rescaling of space and reparameterization of time, the same is true for regular Ricci flow.

\end{example}
%%%%%%%%%%%%%%%%%%%%%%%%%%%%%%%%%%
\section{Entropy and harmonic spinors}

This section shows that Perelman's $\lambda$-entropy can be expressed in terms of the energy of a harmonic spinor. Furthermore, the relationship between Perelman's $\lambda$-entropy and the first eigenvalue of the square of the Dirac operator is investigated in the context of Ricci flow.

Let $(M^n,g)$ be a closed Riemannian manifold.
Perelman's $\lambda$-entropy is defined as the first eigenvalue of the operator $-4\Delta +\Scal$, or equivalently, as the minimum of the weighted Hilbert-Einstein functional \cite{P02}:
\begin{equation}
    \lambda(g)
    =\inf_{\int u^2=1}\;\int_M\left(4|\nabla u|^2+\Scal u^2\right)dV
    =\inf_{\int e^{-f}=1}\;\int_M\Scal_fe^{-f} \,dV.
\end{equation}
A function $f$ is the minimizer of the $\lambda$-entropy if and only if $\Scal_f$ is constant, with $\Scal_f=\lambda(g)$.
On manifolds admitting a harmonic spinor, the following theorem provides a formula for the $\lambda$-entropy in terms of a weighted harmonic spinor.
This result should be compared to the case of Ricci flows on asymptotically Euclidean spin manifolds, where the analogue of the $\lambda$-entropy can always be expressed as the weighted Dirichlet energy of a suitable weighted Witten spinor \cite{BO1,BO2}.

\begin{theorem}
\label{thm: lambda-entropy in terms of harmonic spinor}
Let $f$ be the minimizer of Perelman's $\lambda$-entropy on a closed manifold $(M^n,g)$ which admits a harmonic spinor $\phi$. Then the spinor $\psi=e^{f/2}\phi$ satisfies
\begin{equation}\label{eqn: formula for lambda in terms of harmonic spinor}
    \lambda(g)
    =-4\frac{\|\nabla \psi\|_{L^2_f}^2}{\|\psi\|_{L^2_f}^2}.
\end{equation}
\end{theorem}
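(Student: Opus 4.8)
The plan is to obtain (\ref{eqn: formula for lambda in terms of harmonic spinor}) as an immediate consequence of the integrated weighted Schr\"odinger--Lichnerowicz identity (\ref{eqn: weighted integration by parts}), fed the two structural inputs of the theorem: that $\phi$ is harmonic and that $f$ is the $\lambda$-entropy minimizer. There is no analytic difficulty; once the correct identity is isolated, the argument is a single substitution.

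First I would translate the hypothesis on $\phi$ into a statement about $\psi$. Since $D_f=e^{f/2}De^{-f/2}$ is unitarily equivalent to $D$ via $\psi\mapsto e^{f/2}\psi$ (the remark following (\ref{eqn: weighted Laplcian IBP})), the equation $D\phi=0$ is equivalent to $D_f\psi=0$; thus $\psi=e^{f/2}\phi$ is a weighted harmonic spinor. Moreover $\phi\not\equiv 0$ and $e^{f/2}>0$ pointwise, so $\psi\not\equiv 0$ and $\|\psi\|_{L^2_f}^2=\int_M|\psi|^2 e^{-f}\,dV>0$; in particular the right-hand side of (\ref{eqn: formula for lambda in terms of harmonic spinor}) is well defined.

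Next, because $f$ minimizes the $\lambda$-entropy, $\Scal_f$ is constant and equal to $\lambda(g)$ (recalled just before the theorem statement). Substituting $D_f\psi=0$ and $\Scal_f\equiv\lambda(g)$ into the crucial formula (\ref{eqn: weighted integration by parts}) gives
\[
    \tfrac14\,\lambda(g)\int_M|\psi|^2 e^{-f}\,dV \;=\; -\int_M|\nabla\psi|^2\,e^{-f}\,dV,
\]
and dividing through by $\|\psi\|_{L^2_f}^2>0$ yields exactly (\ref{eqn: formula for lambda in terms of harmonic spinor}).

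I expect no genuine obstacle: the entire content sits in the identity (\ref{eqn: weighted integration by parts}), which is already available, and the proof is the above substitution. The only items deserving a remark are the non-vanishing of $\psi$ (handled above) and the equivalence ``$f$ minimizer $\iff$ $\Scal_f$ constant,'' which is standard; should one wish to keep the argument self-contained, (\ref{eqn: weighted integration by parts}) itself follows by pairing the weighted Schr\"odinger--Lichnerowicz formula with the $L^2_f$-self-adjointness of $D_f$, i.e.\ by integrating $\langle D_f^2\psi,\psi\rangle_{L^2_f}$ by parts.
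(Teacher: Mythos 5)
Your argument is correct and is essentially the paper's own proof: translate $D\phi=0$ into $D_f\psi=0$ via the unitary equivalence, use that the minimizer $f$ makes $\Scal_f\equiv\lambda(g)$ constant, and substitute into the integrated weighted Schr\"odinger--Lichnerowicz identity (\ref{eqn: weighted integration by parts}). Your added remarks on the nonvanishing of $\psi$ and the well-definedness of the quotient are fine but not a departure from the paper's route.
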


\begin{proof}
Suppose $D\phi=0$ for some nontrivial spinor $\phi$. 
Since $D_f=e^{f/2}De^{-f/2}$ is a unitary equivalence, if $\psi=e^{f/2}\phi$, then $D_f\psi=0$ and  $\|\psi\|_{L^2_f}=\|\phi\|_{L^2}$.
Further, since $\Scal_f=\lambda(g)$ is constant, Equation (\ref{eqn: weighted integration by parts}) implies
\begin{equation}
    \lambda(g)
    =\frac{\int_M\Scal_f |\psi|^2\,e^{-f}dV}{\int_M |\psi|^2\,e^{-f}dV}
    =\frac{\int_M 4\left(|D_f\psi|^2-|\nabla \psi|^2\right)e^{-f}dV}{\int_M |\psi|^2\,e^{-f}dV}
    =-\frac{4\int_M|\nabla \psi|^2\,e^{-f}dV}{\int_M |\psi|^2\,e^{-f}dV}.
\end{equation}
\end{proof}

This theorem has important consequences along a Ricci flow.
Let $(M^n,g(t))$ be a Ricci flow evolving by $\partial_tg=-2\Ric$ on a closed manifold admitting a harmonic spinor at each time, for example, if $\ind(D)\neq 0$. 
Let $f$ be the minimizer of Perelman's $\lambda$-entropy at each time along a Ricci flow. 
Then Perelman's monotonicity formula states that
\begin{equation}\label{eqn: lambda monotonicity}
    \lambda'(g(t)) = 2\int_M|\Ric_f|^2\,e^{-f}dV.
\end{equation}
This formula can be recast in terms of a harmonic spinor as follows.
If $D_f\psi=0$, then the weighted Ricci identity (\ref{eqn: weighted Ricci identity}) implies
$
    \frac{1}{4}|\Ric_f|^2|\psi|^2
    =|D_f\nabla \psi|^2.
$
Combined with (\ref{eqn: lambda monotonicity}), this implies the following monotonicity formula
\begin{equation}\label{eqn: monotonicity rewritten}
    \frac{d}{dt}\left(\frac{\|\nabla \psi\|_{L^2_f}^2}{\|\psi\|_{L^2_f}^2}\right)
    =-2\int_M\frac{|D_f\nabla \psi|^2}{|\psi|^2}\,e^{-f}dV.
\end{equation}
Furthermore, Perelman's differential inequality %\cite{P02}
$    
    \lambda'(g(t)) \geq \frac{2}{n}\lambda^2(g(t)),
$
can be recast as a differential inequality for the energy of a harmonic spinor along the flow:
\begin{equation}\label{eqn: monotonicity inequality}
    \frac{d}{dt}\left(\frac{\|\nabla \psi\|_{L^2_f}^2}{\|\psi\|_{L^2_f}^2}\right)
    \leq -\frac{8}{n}\left(\frac{\|\nabla \psi\|_{L^2_f}^2}{\|\psi\|_{L^2_f}^2}\right)^2,
\end{equation}
with equality if and only if the metric is Einstein. 

To deal with the expanding case, define Perelman's normalized $\lambda$-entropy as
\begin{equation}\label{eqn: normalized lambda}
    \bar{\lambda}(g)=\lambda(g)\Vol_g(M^n)^{\frac{2}{n}}.
\end{equation}
This quantity is invariant under scaling of the metric. Perelman's \cite{P02} inequality $\bar{\lambda}'\geq 0$ then implies the following scale-invariant monotonicity formula
\begin{equation}\label{eqn: scale inv monotonicity rewritten}
    \frac{d}{dt}\left(\frac{\|\nabla \psi\|_{L^2_f}^2}{\|\psi\|_{L^2_f}^2}\Vol(M^n)^{\frac{2}{n}}\right)
    \leq 0,
\end{equation}
with equality if and only if the manifold is Einstein.
This inequality is particularly interesting in four dimensions; see Corollary \ref{cor: weighted LeBrun estimate}.

%%%%%%%%%%%%%%%%%%%%%%%%%%%%%%%%%%
\subsection{Eigenvalue evolution}\label{sec: Negative Einstein manifolds}

Perelman's $\lambda$-entropy is equal to the first eigenvalue of the operator $L=-4\Delta +\Scal$ acting on functions, and Ricci flow is the gradient flow of this quantity.
Due to the Schr\"odinger-Lichnerowicz formula, the operators $4D^2=-4\Delta+\Scal$ and Perelman's operator $L$ appear to be closely related.
Despite their formal similarity, the evolution of the smallest eigenvalues of said operators under Ricci flow is very different: $\lambda$ is monotone increasing, whereas the first eigenvalue $\lambda_1$ of the square of the Dirac operator may decrease, as is shown below.
What allows for this differing behavior is the fact that $\lambda_1$ is always non-negative, whereas $\lambda$ may be negative.

Let $(M^n,g,f)$ be a closed gradient Ricci soliton admitting a spin structure. 
This means that the metric satisfies the equation
\begin{equation}
    \Ric+\Hess_f = \frac{\epsilon}{2}g,
\end{equation}
for some $\epsilon = -1,0,1$. 
The soliton is called expanding if $\epsilon=-1$, steady if $\epsilon=0$, and shrinking if $\epsilon=1$.
Corresponding to this gradient Ricci soliton, there exists a Ricci soliton flow $g(t)$, defined for all $t\in \R$ such that $\tau(t)>0$, where $\tau(t)=-\epsilon t$ in the expanding and shrinking case, and $\tau(t)=1$ in the steady case. 
The flow evolves only by diffeomorphisms and scalings, and satisfies $g(-\epsilon)=g$.

In particular, scalar geometric quantities such as $\lambda_1$ evolve only by scaling along the flow:
under a Ricci soliton flow, it follows from the fact that $g(t)$ is conformal to $g$ (up to a diffeomorphism) with conformal factor $\tau(t)$, that
\begin{equation}
    \lambda_1(g(t))=\frac{\lambda_1(g)}{\tau(t)}.
\end{equation}
Therefore, if $\lambda_1$ is nonzero (meaning no harmonic spinors exist), then this eigenvalue is decreasing on expanding soliton flows, constant on steady soliton flows, and increasing on shrinking soliton flows.
For example, the round sphere $S^n$ with its constant curvature metric induces a shrinking Ricci soliton flow admitting no harmonic spinors; hence $\lambda_1$ is increasing along such a flow.
% (This may also be deduced from the evolution ODE for the scalar curvature on $S^n$, and the fact that $\lambda_1=\frac{n}{4(n-1)}\Scal$ on $S^n$.)
Below examples are given of expanding soliton flows for which $\lambda_1$ is strictly decreasing. These flows are induced by hyperbolic surfaces equipped with spin structures admitting no harmonic spinors.

\begin{example}
[Hyperbolic surfaces without harmonic spinors]\label{ex: negative Einstein manifold with no harmonic spinors}
A closed, oriented surface $M^2$ of genus $\mathfrak{g}$ has $|H^1(M;\Z_2)|=2^{2\mathfrak{g}}$ inequivalent spin structures.
Hitchin \cite[Prop. 2.3]{Hi74} showed that if $\mathfrak{g}\leq 2$, the dimension of the kernel of the Dirac operator is independent of the Riemannian metric, and depends only on the spin structure.
Moreover, he showed that if $\mathfrak{g}=2$, then there are 6 spin structures admitting a harmonic spinor, while the remaining 10 admit none. 
In particular, if $M^2$ is a two-holed torus ($\mathfrak{g}=2$), there exists a spin structure on $M^2$ admitting no harmonic spinors for any choice of metric. Equipping $M^2$ with a hyperbolic metric $g$, it follows that $\lambda_1(g)>0$, and along the associated expanding soliton flow $g(t)=tg$, the eigenvalue $\lambda_1(g(t))$ is decreasing. 
\end{example}

\section{Entropy and monopoles}
\label{sec: entropy and monopoles}

A spin-c structure $\mathfrak{s}$ on an oriented Riemannian 4-manifold $(M^4,g)$ determines, up to isomorphism, a complex line bundle $L\to M$ such that $c_1(L)\equiv w_2(TM) \mod 2$ and two rank-2 Hermitian vector bundles $W^{\pm}\to M$ such that $\det(W^{\pm})=L$ and $T^*M\otimes \C\cong \Hom(W^+,W^-)$.
The latter isomorphism corresponds to Clifford multiplication, and satisfies the Clifford algebra identity (\ref{eqn: Clifford algebra identity}).

On any contractible open set in $M$, there exists a canonical isomorphism $W^{\pm} \cong \Sigma^{\pm}\otimes L^{\frac{1}{2}}$, where $\Sigma^{\pm}$ are the (locally defined) positive and negative spinor bundles of $g$, and $L^{\frac{1}{2}}$ is a complex line bundle whose square is $L$.
Each unitary connection $A$ on $L$ thus determines a unitary connection $\nabla^A:\Gamma(W^+)\to \Gamma(W^+\otimes T^*M)$ on $W^+$, and composing this with the isomorphism $T^*M\otimes \C \cong \Hom(W^+,W^-)$ induces the spin-c Dirac operator $D_A:\Gamma(W^+)\to \Gamma(W^-)$. 
On certain spin-c 4-manifolds, including K\"ahler manifolds, there exists a canonical choice of connection $A$, but in general, no canonical choice exists.
The Seiberg-Witten monopole equations (\ref{eqn: monopole equation}) determine a special class of connections.

\subsection{Weighted monopoles}

Let $(M^4,g,f,\mathfrak{s})$ be a closed, weighted, spin-c 4-manifold. 
The spin-c Dirac operator defined by a $U(1)$-connection $A$ on the associated complex line bundle $L\to M$ is locally a twisted Dirac operator. Therefore, Proposition \ref{prop: Weighted Schrodinger-Lichnerowicz for twisted Dirac operators} implies the following weighted Schr\"odinger-Lichnerowicz formula
\begin{equation}\label{eqn: weighted spin-c Schr\"odinger-Lichnerowicz formula}
D_{A,f}^2=-\Delta_{A,f}+\frac{1}{4}\Scal_f +\frac{1}{2}F_A\cdot,
\end{equation}
where $D_{A,f}=e^{f/2}D_Ae^{-f/2}$ is the weighted spin-c Dirac operator.
The monopole equations for a pair $(A,\phi)$ are the coupled system given by
\begin{align}\label{eqn: monopole equation}
        F_A^+\cdot &= (\phi \otimes \phi^*)^{\circ} \\
        D_A\phi&=0. \nonumber
\end{align}
The first equation takes place in the space of traceless endomorphisms of $W^+$, with $(\phi \otimes \phi^*)^{\circ}=\phi\otimes \phi^*-\frac{1}{2}|\phi|^2\id$ denoting the traceless part of the endomorphism $\phi\otimes \phi^*\in \Gamma(\End(W^+))$.

If $(A,\phi)$ is a solution of the monopole equations, then the spinor $\psi=e^{f/2}\phi$ together with the connection $A$ satisfies the \textit{weighted monopole equations}
\begin{align}\label{eqn: weighted monopole equation}
        F_A^+\cdot &= e^{-f}(\psi \otimes \psi^*)^{\circ} \\
        D_{A,f}\psi&=0. \nonumber
\end{align}
Note that the exponential term in the first equation stems from the nonlinearity of the monopole equations.
The weighted Schr\"odinger-Lichnerowicz formula (\ref{eqn: weighted spin-c Schr\"odinger-Lichnerowicz formula}) combined with (\ref{eqn: weighted monopole equation}) then implies
\begin{align}\label{eqn: weighted square of monopole}
    0
    =D_{A,f}^2\psi
    &=-\Delta_{A,f}\psi+\frac{1}{4}\Scal_f \psi+\frac{1}{2}F_A^+\cdot\psi 
    \\\nonumber
    &=-\Delta_{A,f}\psi+\frac{1}{4}\Scal_f \psi+\frac{1}{4}e^{-f}|\psi|^2\psi.
\end{align}
Upon taking the inner product with $\psi$, it follows that
\begin{align}\label{eqn: intermediate weighted SW}
    0
    &=-\Rea \langle \Delta_{A,f}\psi,\psi\rangle+\frac{1}{4}\Scal_f|\psi|^2 +\frac{1}{4}e^{-f}|\psi|^4
    \\\nonumber
    &=-\frac{1}{2}\Delta_f|\psi|^2 +|\nabla^A\psi|^2+\frac{1}{4}\Scal_f|\psi|^2 +\frac{1}{4}e^{-f}|\psi|^4.
\end{align}
This equation implies a generalization of the well-known $C^0$-estimate for monopoles, which is crucial for the compactness theory of the monopole equations and applications to scalar curvature geometry.

\begin{lemma}
    [Weighted $C^0$-estimate]\label{lem: weighted C0 estimate}
    Let $(A,\psi)$ be a solution of the weighted monopole equations (\ref{eqn: weighted monopole equation}) on a closed, weighted 4-manifold $(M^4,g,f)$. Then
    \begin{equation}
        \max_M\;|\psi|^2 \leq -\min_M \Scal_f\,e^f.
    \end{equation}
\end{lemma}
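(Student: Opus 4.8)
The plan is to run a maximum-principle argument on the pointwise identity (\ref{eqn: intermediate weighted SW}), which already packages the Bochner computation coming from the weighted Schr\"odinger-Lichnerowicz formula (\ref{eqn: weighted Schr\"odinger-Lichnerowicz}) and the weighted monopole equations. Rewriting (\ref{eqn: intermediate weighted SW}) in the form
\[
    \tfrac{1}{2}\,\Delta_f|\psi|^2
    = |\nabla^A\psi|^2 + \tfrac{1}{4}\Scal_f|\psi|^2 + \tfrac{1}{4}\,e^{-f}|\psi|^4 ,
\]
and discarding the nonnegative term $|\nabla^A\psi|^2$, one obtains the differential inequality
\[
    \tfrac{1}{2}\,\Delta_f|\psi|^2 \;\geq\; \tfrac{1}{4}\Scal_f|\psi|^2 + \tfrac{1}{4}\,e^{-f}|\psi|^4 .
\]
For $f\equiv 0$ this is exactly the inequality underpinning the classical $C^0$-estimate for Seiberg-Witten monopoles, so the argument below is the weighted analogue of that estimate.

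Next I would apply the maximum principle on the closed manifold $M$. Let $p\in M$ be a point at which the continuous function $|\psi|^2$ attains its maximum. If $|\psi|^2(p)=0$ then $\psi\equiv 0$ and the estimate holds trivially, so assume $\max_M|\psi|^2=|\psi|^2(p)>0$. With the sign convention $\Delta=\tr\Hess$ used in (\ref{eqn: weighted Schr\"odinger-Lichnerowicz}), an interior maximum forces $\nabla|\psi|^2(p)=0$ and $\Delta|\psi|^2(p)\leq 0$, hence
\[
    \Delta_f|\psi|^2(p) = \Delta|\psi|^2(p) - \langle \nabla f,\nabla|\psi|^2\rangle(p) \leq 0 .
\]
Evaluating the differential inequality at $p$ then yields $0 \geq \tfrac14\Scal_f(p)|\psi|^2(p) + \tfrac14\,e^{-f(p)}|\psi|^4(p)$, and dividing by the positive number $\tfrac14|\psi|^2(p)$ gives $|\psi|^2(p)\leq -\Scal_f(p)\,e^{f(p)}$. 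Since $|\psi|^2(p)=\max_M|\psi|^2$ while $-\Scal_f(p)\,e^{f(p)}\leq \max_M\big(-\Scal_f e^f\big)=-\min_M\big(\Scal_f e^f\big)$, the stated bound follows.

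I do not anticipate a genuine obstacle here: this is a one-point maximum-principle estimate, and all of the analytic input was already extracted in the passage from (\ref{eqn: weighted square of monopole}) to (\ref{eqn: intermediate weighted SW}). The only points requiring care are bookkeeping the signs of $\Delta$ and of $\Delta_f=\Delta-\nabla_{\nabla f}$, so that a maximum of $|\psi|^2$ produces $\Delta_f|\psi|^2\leq 0$ (and not $\geq 0$), and observing that the positivity $|\psi|^2(p)>0$ — which licenses the division — is exactly where the hypothesis $\psi\not\equiv 0$ enters.
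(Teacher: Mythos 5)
Your proposal is correct and follows essentially the same route as the paper: both rearrange the pointwise identity (\ref{eqn: intermediate weighted SW}), evaluate at a maximum of $|\psi|^2$ where $\Delta_f|\psi|^2\leq 0$, discard the nonnegative term $|\nabla^A\psi|^2$, and divide by $|\psi|^2>0$ to conclude. Your explicit handling of the trivial case $\psi\equiv 0$ and the sign bookkeeping for $\Delta_f$ are minor refinements of the same argument.
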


\begin{proof}
    If $\psi$ is not identically zero, then (\ref{eqn: intermediate weighted SW}) implies
    \begin{equation}
        |\psi|^2
        =2\frac{\Delta_f |\psi|^2}{|\psi|^2}e^f -4\frac{|\nabla^A\psi|^2}{|\psi|^2}e^f-\Scal_f\,e^f.
    \end{equation}
    At a maximum of $|\psi|^2$, $\nabla |\psi|^2=0$ and $\Delta |\psi|^2\leq 0$, so $\Delta_f|\psi|^2\leq 0$. The second term on the right hand side is also non-positive, so the claim follows.
\end{proof}

A spin-c structure $\mathfrak{s}$ on $M$ is called a \textit{monopole class} if the monopole equations (\ref{eqn: monopole equation}) have a solution for every Riemannian metric on $M$.
(Spin-c structures are in bijection with $H^2(M;\Z)$ when the latter is torsion-free, explaining the term ``class''.)
In particular, if $\mathfrak{s}$ is a monopole class, then there exist weighted monopoles on $M$ for any choice of Riemannian metric and any choice of weight function $f$.
A solution $(A,\psi)$ of the weighted monopole equations (\ref{eqn: weighted monopole equation}) is called \textit{reducible} if $\psi\equiv 0$; otherwise, the solution is called \textit{irreducible}.
The monopole equations combined with the formula
\begin{equation}\label{eqn: Chern-Weil}
    \int_M\left(|F_A^+|^2-|F_A^-|^2\right) dV 
        = 4\pi^2 c_1^2(\mathfrak{s}),
\end{equation}
imply that all solutions of the monopole equations are irreducible if $c_1^2(\mathfrak{s})>0$;
in particular, all monopole classes satisfying this condition admit irreducible monopoles.

The condition that $\mathfrak{s}$ is a monopole class may be viewed as an analogue of the condition that the index of the Dirac operator on a spin manifold is nontrivial -- both are topological conditions guaranteeing the existence of solutions.
Taubes showed that the canonical spin-c structure of a symplectic 4-manifold with $b_2^+\geq 2$ is a monopole class \cite{T00}; this result applies, for example, to exotic K3 surfaces (manifolds homeomorphic, but not diffeomorphic, to the K3 surface) which are constructed via knot surgery.
This important example is discussed in Section \ref{sec: immortal}.

The main theorem of this section follows from the integral version of (\ref{eqn: intermediate weighted SW}). It reads
\begin{equation}\label{eqn: intermediate weighted SW; integral}
    0=\int_M\left(|\nabla^A\psi|^2+\frac{1}{4}\Scal_f|\psi|^2+\frac{1}{4}e^{-f}|\psi|^4\right)\, e^{-f}dV.
\end{equation}
From this formula, it follows immediately that if $\Scal_f>0$, 
then all solutions of the monopole equations are reducible. 
In particular, the associated Seiberg-Witten invariant, which counts irreducible solutions, vanishes. 
The unweighted version of the above integral equation is the cornerstone of applications of monopole theory to the study of 4-dimensional Riemannian geometry \cite{L96}.
The weighted version derived here implies the following theorem.

\begin{theorem}
[$\lambda$-entropy in terms of monopole]\label{thm: lambda-entropy in terms of monopole}
Let $f$ be the minimizer of Perelman's $\lambda$-entropy on a closed manifold $(M^4,g)$ admitting an irreducible monopole $(A,\phi)$. Then $\psi=e^{f/2}\phi$ satisfies
\begin{equation}
    \lambda(g)
    =-\frac{\int_M\left(4|\nabla^A \psi|^2+e^{-f}|\psi|^4\right)\,e^{-f}dV}{\int_M|\psi|^2\,e^{-f}dV}.
\end{equation}
\end{theorem}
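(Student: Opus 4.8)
The plan is to mirror the proof of Theorem~\ref{thm: lambda-entropy in terms of harmonic spinor}, using the weighted spin-c Schr\"odinger--Lichnerowicz formula in place of its untwisted analogue. First I would observe that, since $(A,\phi)$ solves the monopole equations (\ref{eqn: monopole equation}) and $D_{A,f}=e^{f/2}D_Ae^{-f/2}$, the pair $(A,\psi)$ with $\psi=e^{f/2}\phi$ solves the weighted monopole equations (\ref{eqn: weighted monopole equation}). Because the monopole is irreducible, $\psi\not\equiv 0$, so $\|\psi\|_{L^2_f}^2=\int_M|\psi|^2e^{-f}dV>0$ and the right-hand side of the claimed formula is well defined.

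Next, since $f$ is the minimizer of Perelman's $\lambda$-entropy, the P-scalar curvature $\Scal_f$ is constant and equal to $\lambda(g)$. Substituting this into the integrated identity (\ref{eqn: intermediate weighted SW; integral}), which has already been derived from the weighted spin-c Schr\"odinger--Lichnerowicz formula and the weighted monopole equations, and pulling the constant out of the integral gives
\[
    0=\int_M|\nabla^A\psi|^2\,e^{-f}dV+\frac{\lambda(g)}{4}\int_M|\psi|^2\,e^{-f}dV+\frac14\int_M e^{-f}|\psi|^4\,e^{-f}dV.
\]
Solving this linear equation for $\lambda(g)$ and clearing the factor $4$ yields precisely
\[
    \lambda(g)=-\frac{\int_M\left(4|\nabla^A\psi|^2+e^{-f}|\psi|^4\right)e^{-f}dV}{\int_M|\psi|^2\,e^{-f}dV}.
\]

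I do not expect a genuine obstacle: all of the analytic content is already contained in the identity (\ref{eqn: intermediate weighted SW; integral}), and the theorem is obtained from it by the single algebraic step of invoking $\Scal_f\equiv\lambda(g)$. The only points worth spelling out are the strict positivity of $\int_M|\psi|^2e^{-f}dV$ (which is where irreducibility enters) and the characterization of the entropy minimizer by constancy of $\Scal_f$, both recalled earlier in the text. It is also worth remarking that, compared with the harmonic-spinor case (Theorem~\ref{thm: lambda-entropy in terms of harmonic spinor}), the curvature term $\tfrac12 F_A^+\cdot\psi$ in the Lichnerowicz formula --- which the first weighted monopole equation rewrites as $\tfrac14 e^{-f}|\psi|^2\psi$ --- is exactly what produces the extra quartic term, so that the full weighted monopole energy, rather than merely the weighted Dirichlet energy, governs the entropy.
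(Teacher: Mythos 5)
Your proposal is correct and follows essentially the same route as the paper: the paper's proof likewise notes that the minimizer $f$ makes $\Scal_f\equiv\lambda(g)$ constant and that irreducibility makes $\int_M|\psi|^2e^{-f}dV>0$, then rearranges the integrated identity (\ref{eqn: intermediate weighted SW; integral}) to solve for $\Scal_f$. Your additional remark identifying the curvature term $\tfrac12F_A^+\cdot\psi$ as the source of the quartic term is accurate but not needed beyond what is already established in the derivation of (\ref{eqn: intermediate weighted SW; integral}).
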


\begin{proof}
With the given choice of $f$, $\Scal_f$ is constant with $\Scal_f=\lambda(g)$. Since the monopole is irreducible, the proof is completed by rearranging (\ref{eqn: intermediate weighted SW; integral}) to solve for $\Scal_f$.
\end{proof}

\begin{remark}
[Monotonicity formulas]
The theorem combined with Perelman's monotonicity formula for $\lambda$ along the Ricci flow immediately implies monotonicity formulas for the weighted energy of a monopole along the Ricci flow similar to (\ref{eqn: monotonicity rewritten}), (\ref{eqn: monotonicity inequality}) and (\ref{eqn: scale inv monotonicity rewritten}). The proofs follow the same reasoning as for the case of regular harmonic spinors given in the previous section. 
\end{remark}

Theorem \ref{thm: lambda-entropy in terms of monopole} also implies a scale-invariant, weighted $L^2$ scalar curvature estimate.
The estimate has been stated in a different form and derived by different means in \cite{FZ07}; 
expressed in terms of the weighted scalar curvature $\Scal_f$, the estimate appears as the natural weighted version of LeBrun's 
$L^2$ scalar curvature estimate, which is central to applications of monopole theory to 4-dimensional Riemannian geometry \cite{L95}.
Further, Theorem \ref{thm: lambda-entropy in terms of monopole} allows for a simpler proof.

\begin{corollary}
    [Weighted $L^2$ curvature estimate]\label{cor: weighted LeBrun estimate}
    Let $\mathfrak{s}$ be a monopole class on a closed 4-manifold $(M^4,g)$ such that $c_1^2(\mathfrak{s})>0$, and let $f$ be the minimizer of Perelman's $\lambda$-entropy with $\int_Me^{-f}dV=\Vol_g(M)$.
    Then the following scale-invariant inequality holds
    \begin{equation}\label{eqn: weighted L2 scalar estimate}
            \int_M\Scal_f^2\,e^{-f}dV
            \geq 32\pi^2 c_1^2(\mathfrak{s}),
        \end{equation}
    with equality if and only if $g$ is K\"ahler-Einstein with negative scalar curvature.
    Moreover, along Ricci flow, the left-hand-side is monotone decreasing and constant if and only if the manifold is Einstein.
\end{corollary}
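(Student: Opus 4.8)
The plan is to derive the inequality directly from Theorem \ref{thm: lambda-entropy in terms of monopole} by a Cauchy--Schwarz argument, exactly mirroring the classical LeBrun estimate but with the weighted measure in place of the Riemannian volume form. First I would fix an irreducible monopole $(A,\phi)$ (which exists since $c_1^2(\mathfrak{s})>0$ forces irreducibility, as noted after (\ref{eqn: Chern-Weil})), set $\psi = e^{f/2}\phi$, and use Theorem \ref{thm: lambda-entropy in terms of monopole} together with the fact that $\Scal_f \equiv \lambda(g)$ is constant to write
\begin{equation}
    -\lambda(g)\int_M|\psi|^2\,e^{-f}dV
    = \int_M\left(4|\nabla^A\psi|^2 + e^{-f}|\psi|^4\right)e^{-f}dV
    \geq \int_M e^{-f}|\psi|^4\,e^{-f}dV.
\end{equation}
Then, since $-\lambda(g) = -\Scal_f = |\Scal_f|$ up to sign considerations (the estimate is only of interest when $\lambda(g)<0$, which is automatic here because an irreducible monopole exists), I would apply Cauchy--Schwarz in the form $\left(\int_M |\psi|^2 e^{-f}dV\right)^2 \leq \left(\int_M e^{-f}dV\right)\left(\int_M e^{-f}|\psi|^4\,e^{-f}dV\right)$. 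Combining the last two displays and the normalization $\int_M e^{-f}dV = \Vol_g(M)$ yields $-\lambda(g)\Vol_g(M) \geq \int_M |\psi|^2 e^{-f}dV$, hence $\lambda(g)^2 \Vol_g(M)^2 \geq \left(\int_M|\psi|^2 e^{-f}dV\right)\cdot(-\lambda(g)\Vol_g(M)) \geq \left(\int_M |\psi|^2 e^{-f}dV\right)^2 \cdot$ ... — I would organize the chain of inequalities so as to land on $\int_M\Scal_f^2 e^{-f}dV = \lambda(g)^2\Vol_g(M) \geq (\text{const})\,\|\psi\|^4_{L^4_f}/\Vol_g(M)$, and then bound $\|\psi\|^4$ from below by $c_1^2(\mathfrak{s})$ using the curvature equation.

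The step supplying the topological constant $32\pi^2 c_1^2(\mathfrak{s})$ is the pointwise-to-cohomological bridge: from the first weighted monopole equation $F_A^+\cdot = e^{-f}(\psi\otimes\psi^*)^{\circ}$ one computes $|F_A^+|^2 = \tfrac{1}{8}e^{-2f}|\psi|^4$ (the same algebraic identity $|(\phi\otimes\phi^*)^{\circ}|^2 = \tfrac18|\phi|^4$ used in the unweighted theory, since conjugation by $e^{f/2}$ does not affect the pointwise Clifford-module norm computation once the $e^{-f}$ factor is tracked), so that $\int_M e^{-f}|\psi|^4 e^{-f}dV = 8\int_M |F_A^+|^2\,dV$. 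Then the Chern--Weil identity (\ref{eqn: Chern-Weil}) gives $\int_M|F_A^+|^2 dV \geq \int_M(|F_A^+|^2 - |F_A^-|^2)dV = 4\pi^2 c_1^2(\mathfrak{s})$, so $\int_M e^{-f}|\psi|^4\,e^{-f}dV \geq 32\pi^2 c_1^2(\mathfrak{s})$. Feeding this into the Cauchy--Schwarz chain above closes the estimate. I would double-check the exact placement of the factor $4$ from the $4|\nabla^A\psi|^2$ term — dropping it is what makes the argument work, and this is precisely where the weighted LeBrun estimate, like LeBrun's, is not sharp unless $\nabla^A\psi$ vanishes and the Cauchy--Schwarz is an equality.

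For the equality case I would argue that equality forces (i) $\nabla^A\psi \equiv 0$, so $\psi$ has constant norm and $|F_A^+|^2$ is constant, and (ii) equality in Cauchy--Schwarz, so $|\psi|^2$ (equivalently $e^{-f}|\psi|^4$, hence $|F_A^+|$) is constant, and (iii) equality in $\int|F_A^+|^2 \geq \int(|F_A^+|^2-|F_A^-|^2)$, i.e. $F_A^- \equiv 0$, so $F_A^+$ is a harmonic self-dual form of constant length. A parallel spinor $\psi$ for the connection $A$ together with $F_A = F_A^+$ nowhere-zero forces a Kähler structure in the standard way (the self-dual 2-form $\langle\psi, \cdot\,\psi\rangle$ is the Kähler form up to scale), and constancy of all the quantities plus $\Scal_f$ constant should then, via the weighted Schrödinger--Lichnerowicz equation (\ref{eqn: weighted square of monopole}), force $e^{-f}|\psi|^2 = -\Scal_f$ to be constant, hence $f$ constant, hence $g$ itself Kähler--Einstein with negative scalar curvature; this recovers LeBrun's rigidity statement. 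I expect this equality analysis to be the main obstacle, since one must rule out genuinely weighted (non-constant $f$) equality cases — the key leverage is that $\Scal_f$ constant plus $e^{-f}|\psi|^2$ constant (both forced at equality) gives $e^{-f}$ constant directly. Finally, the monotonicity-along-Ricci-flow claim is immediate from (\ref{eqn: scale inv monotonicity rewritten}): since $\int_M\Scal_f^2 e^{-f}dV = \lambda(g)^2\Vol_g(M)$ when $\int e^{-f}dV = \Vol_g(M)$, and $\lambda(g)\Vol_g(M)^{2/n}$ is the scale-invariant quantity, one checks in dimension $n=4$ that $\bar\lambda(g)^2 = \lambda(g)^2\Vol_g(M)$, so $\int_M\Scal_f^2 e^{-f}dV = \bar\lambda(g)^2$ is monotone (decreasing, since $\bar\lambda \leq 0$ here and $\bar\lambda' \geq 0$), constant precisely when $\bar\lambda' = 0$, i.e. when $\Ric_f \equiv 0$, which for the $\lambda$-minimizer means the metric is Einstein.
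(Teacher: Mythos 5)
Your proof of the inequality itself, and of the monotonicity statement, is essentially the paper's argument: start from Theorem \ref{thm: lambda-entropy in terms of monopole} with $\Scal_f\equiv\lambda(g)$, discard the $4|\nabla^A\psi|^2$ term, use the pointwise identity $e^{-2f}|\psi|^4=|\phi|^4$ (so $\int_M e^{-2f}|\psi|^4\,dV=8\int_M|F_A^+|^2\,dV$), apply Cauchy--Schwarz together with the normalization $\int_M e^{-f}dV=\Vol_g(M)$, and finish with the Chern--Weil identity (\ref{eqn: Chern-Weil}); likewise the observation that $\int_M\Scal_f^2\,e^{-f}dV=\bar\lambda(g)^2$ in dimension four, combined with Perelman's $\bar\lambda'\geq 0$ and $\bar\lambda\leq 0$, is exactly how the paper gets the monotonicity and its rigidity. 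Your middle chain of inequalities is left disorganized, but once arranged (multiply $-\lambda\Vol_g(M)\geq\int|\psi|^2e^{-f}dV$ by $-\lambda>0$ and compare with $-\lambda\int|\psi|^2e^{-f}dV\geq\int e^{-2f}|\psi|^4dV$) it closes correctly, so I do not count that as a gap. A small slip: equality in the Cauchy--Schwarz you invoke forces $e^{-f}|\psi|^2=|\phi|^2$ to be constant, not $|\psi|^2$; this is harmless since $\nabla^A\psi=0$ already makes $|\psi|$ constant.

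The genuine gap is in the ``only if'' part of the equality case. From equality you correctly extract $\nabla^A\psi\equiv 0$, $F_A^-\equiv 0$, and (via the weighted Schr\"odinger--Lichnerowicz identity) $e^{-f}|\psi|^2=-\Scal_f$, hence $f$ constant and $g$ K\"ahler with constant negative scalar curvature. But that is only cscK: ``hence $g$ itself K\"ahler--Einstein'' does not follow from what you have derived. The missing step is precisely where your unused item (iii) must enter: the parallel spinor identifies $\mathfrak{s}$ with the canonical spin-c structure and $F_A$ with a multiple of the Ricci form, whose anti-self-dual part is the primitive part $\rho_0$; only then does $F_A^-\equiv 0$ give $\rho=\tfrac{\Scal}{4}\omega$, i.e.\ Einstein. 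The paper avoids this spinorial Ricci-form computation altogether: since $\mathfrak{s}$ is a monopole class, the inequality $\bar\lambda^2\geq 32\pi^2c_1^2$ holds for \emph{every} metric, so equality means $\bar\lambda(g)$ is a global maximum of the normalized entropy, hence constant along the Ricci flow starting at $g$, and Perelman's rigidity for $\bar\lambda'=0$ (Einstein; Ricci-flat excluded by $c_1^2>0$) yields the Einstein condition -- you should either supply the Ricci-form argument or switch to this route. Finally, the statement is an ``if and only if,'' and your proposal omits the converse direction entirely; the paper disposes of it by noting that a negative K\"ahler--Einstein surface with its canonical spin-c structure carries a parallel spin-c spinor and has constant minimizer $f$, so all the inequalities above are saturated.
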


\begin{proof}
Let $(A,\phi)$ be an irreducible monopole, so that $(A,\psi)$ is an irreducible weighted monopole, where $\psi=e^{f/2}\phi$.
Theorem \ref{thm: lambda-entropy in terms of monopole}, implies that 
\begin{align}
    \int_M\Scal_f^2\,e^{-f}dV
    &=\Vol_g(M)\lambda^2(g)
    \\ \nonumber 
    &= \Vol_g(M)\left(\frac{\int_M\left(4|\nabla^A \psi|^2+e^{-f}|\psi|^4\right)\,e^{-f}dV}{\int_M|\psi|^2\,e^{-f}dV}\right)^2
    \\ \nonumber 
    &\geq \Vol_g(M)\left(\frac{\int_M e^{-2f}|\psi|^4 \,dV}{\int_M|\psi|^2\,e^{-f}dV}\right)^2.
\end{align}
The fact that $e^{-2f}|\psi|^4=|\phi|^4=|F_A^+\cdot|^2$ and the Cauchy-Schwarz inequality then imply
\begin{align}
    \int_M\Scal_f^2\,e^{-f}dV
    \geq \Vol_g(M) \left( \frac{\int_M|\phi|^4\,dV}{\int_M|\phi|^2\,dV} \right)^2
    % \\ \nonumber 
    \geq  \int_M|\phi|^4\,dV
    % \\ \nonumber 
    = 8\int_M|F_A^+|^2\,dV.
\end{align}
% Since $|F_A^+\cdot|^2=8|F_A^+|^2$, it follows from (\ref{eqn: Chern-Weil}) that
Combined with (\ref{eqn: Chern-Weil}), it then follows that
\begin{align}
    \int_M\Scal_f^2\,e^{-f}dV
    \geq  8\int_M|F_A^+|^2\,dV
    =  32\pi^2c_1^2+8\int_M|F_A^-|^2\,dV
    \geq  32\pi^2c_1^2.
\end{align}

Equality occurs only if $\nabla^A\psi=0$. Since the solution is irreducible, $\psi$ is a nontrivial parallel spinor.
This implies the manifold is K\"ahler, with K\"ahler form $\omega=\frac{\sqrt{2}}{|F_A^+|}F_A^+$, since the self-dual 2-form $F_A^+$ is non-degenerate and parallel.
To show that the manifold must also be Einstein, note that with this choice of $f$, $\int_M\Scal_f^2\,e^{-f}dV=\bar{\lambda}^2(g)$, where $\bar{\lambda}$ is the normalized $\lambda$-entropy (\ref{eqn: normalized lambda}). 
Hence, if equality holds, then $\bar{\lambda}(g)$ is a global maximum of the normalized $\lambda$-entropy, and is thus constant along any Ricci flow.
Perelman's \cite{P02} scale-invariant monotonicity implies $\bar{\lambda}' \geq 0$ along a Ricci flow, with equality if and only if the flow is Einstein with negative scalar curvature. (Ricci-flat manifolds are excluded by the assumption $c_1^2>0$.) 
This proves the ``only if'' case of equality, and also that the left-hand-side of (\ref{eqn: weighted L2 scalar estimate}) is improving along a Ricci flow, being constant only in the Einstein case.

Finally, it is straightforward to check that any K\"ahler-Einstein manifold with negative scalar curvature and equipped with the canonical spin-c structure saturates the inequality in (\ref{eqn: weighted L2 scalar estimate}), since such manifolds admit parallel spin-c spinors and the minimizer $f$ of $\lambda$ is a constant function.
\end{proof}
\section{Singularities}\label{sec: immortal}

This section proves a sharp parabolic Hitchin-Thorpe inequality, and consequently, that the normalized Ricci flow on any \textit{exotic} K3 surface -- a manifold which is homeomorphic, but not diffeomorphic, to K3 -- must become singular. 
To place the result into context, recall that Thorpe \cite{Th69} showed that if a closed, simply-connected 4-manifold admits an Einstein metric, then the Euler characteristic $\chi$ and signature $\sigma$ of said manifold must satisfy the inequality
\begin{equation}\label{eqn: Hitchin-Thorpe}
    2\chi\geq 3|\sigma|.
\end{equation}
Hitchin \cite{Hi74b} later rediscovered the inequality and characterized the case of equality: equality holds in (\ref{eqn: Hitchin-Thorpe}) if and only if the manifold is isometric to the K3 surface equipped with a hyperk\"ahler metric. 
In applications to the geometry and topology of 4-manifolds, the characterization of equality is of utmost importance, since it implies, for example, that exotic K3s cannot admit Einstein metrics.

Because Einstein metrics are static solutions of the normalized Ricci flow, the normalized Ricci flow is the correct tool for a parabolic generalization of the Hitchin-Thorpe inequality.
A family of metrics $(g(t))_{t\in I}$ on a closed $n$-manifold $M$ evolves by the \textit{normalized Ricci flow} if
\begin{equation}
    \partial_tg=-2\Ric +\frac{2}{n}\bar{\Scal}g,
\end{equation}
where $\avgScal=\fint \Scal \,dV$ denotes the average scalar curvature.
The flow is called normalized because it preserves the volume of the manifold in time and is obtained from the standard Ricci flow $\partial_tg=-2\Ric$ by rescaling the metric at each time, and by reparameterizing time. 

Like the standard Ricci flow, the normalized Ricci flow is a parabolic equation for the metric, so solutions always exist for short time given any smooth initial metric.
However, the flow is nonlinear, so singularities may occur.
Hamilton \cite{Ha82} showed that the flow becomes singular if and only if the curvature blows up at some (possibly infinite) time. 
A \textit{nonsingular} normalized Ricci flow is thus defined as one which exists for all time with uniformly bounded curvature
\begin{equation*}
    \sup_{M\times [0,\infty)} |\Rm|(x,t)<\infty.
\end{equation*}
Singularities of the flow are indicators of nontrivial topology and can aid in providing a geometric decomposition of the manifold into ``standard'' pieces. This is demonstrated in three dimensions by Hamilton-Perelman's theory of Ricci flow with surgery, as well as in higher dimensions under suitable curvature conditions, including positive isotropic curvature.
For applications of Ricci flow to the study of 4-dimensional topology, it is therefore important to know when singularities exist.

The following theorem characterizes equality in the parabolic Hitchin-Thorpe inequality (Corollary \ref{cor: parabolic HT}).

\begin{theorem}\label{thm: parabolic Hitchin rigidity}
    If a closed, spin 4-manifold $M$ satisfies $2\chi = 3|\sigma|>0$ and admits a nonsingular solution $g(t)$ of the normalized Ricci flow, then $g(t)$ converges subsequentially in the smooth Cheeger-Gromov sense to a hyperk\"ahler metric on a finite quotient of K3. 
    % In particular, $\tilde{M}$ is diffeomorphic to K3.
\end{theorem}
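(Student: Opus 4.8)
The plan is to combine the sharp parabolic version of LeBrun's estimate (Corollary \ref{cor: weighted LeBrun estimate}) with the Gauss-Bonnet and signature formulas in order to force equality in a chain of inequalities, and then to extract geometric rigidity from each equality case together with a Cheeger-Gromov limiting argument. Since $M$ is spin and simply-connected with $\sigma\neq 0$, the index theorem (\ref{eqn: index theorem}) shows $\hat A=-\sigma/8\neq 0$, so $M$ admits harmonic spinors for every metric; moreover, after reversing orientation if necessary we may assume $\sigma<0$, i.e. $\hat A>0$. One then wants a monopole class with $c_1^2>0$: the standard route is to invoke the results of \cite{FZZ08-1} (which is cited precisely for deducing Corollary \ref{cor: parabolic HT}) guaranteeing that, under a nonsingular normalized Ricci flow with $2\chi=3|\sigma|$, such a class exists and, in fact, the relevant curvature integrals converge. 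I would take as input from \cite{FZZ08-1} the statement that along a nonsingular normalized Ricci flow one has the asymptotic estimate relating $\int_M\Scal_f^2\,e^{-f}dV$ (equivalently $\bar\lambda^2$, using the normalization $\int_M e^{-f}dV=\Vol$) to $32\pi^2 c_1^2(\mathfrak s)$, and that $2\chi-3|\sigma|$ controls the time-averaged $L^2$-norms of the Weyl and traceless Ricci tensors.

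First I would set up the main inequality: by Corollary \ref{cor: weighted LeBrun estimate}, $\int_M\Scal_f^2\,e^{-f}dV\geq 32\pi^2 c_1^2(\mathfrak s)$ at every time, with the left side monotone decreasing along the flow; hence $\bar\lambda(g(t))^2$ decreases to a limit $\geq 32\pi^2 c_1^2(\mathfrak s)$. Next I would run the Gauss-Bonnet/signature bookkeeping: writing $8\pi^2\chi=\int_M(|W^+|^2+|W^-|^2-\tfrac12|\trRic|^2+\tfrac{1}{24}\Scal^2)dV$ and $12\pi^2\sigma=\int_M(|W^+|^2-|W^-|^2)dV$, subtract to get $2\chi-3|\sigma|$ expressed as a positive combination of $\int|W^\mp|^2$, $\int|\trRic|^2$, and $\int\Scal^2$ minus the topological lower bound $32\pi^2c_1^2$ coming from LeBrun. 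The hypothesis $2\chi=3|\sigma|$ then forces all the "slack" terms to vanish in the appropriate limiting sense along the flow: the Weyl curvature of the correct sign, the traceless Ricci curvature, and the gap between $\bar\lambda^2$ and $32\pi^2 c_1^2(\mathfrak s)$ must all tend to zero in time-average.

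The geometric extraction then proceeds: since $M$ is nonsingular, $|\Rm|$ is uniformly bounded, so Hamilton's compactness theorem yields subsequential Cheeger-Gromov limits $(M_\infty,g_\infty)$ of $(M,g(t))$, which are complete. The vanishing of the traceless Ricci part in the limit, combined with Perelman's monotonicity $\bar\lambda'\geq 0$ with equality iff Einstein (used in the equality discussion of Corollary \ref{cor: weighted LeBrun estimate}), forces $g_\infty$ to be Einstein; the equality case of the weighted LeBrun estimate then forces $g_\infty$ to be Kähler with a parallel self-dual form, hence Kähler-Einstein, and since $c_1^2>0$ but the Einstein constant cannot be positive (spin plus $\hat A\neq 0$ rules out positive scalar curvature by Lichnerowicz), one gets either a negative Kähler-Einstein or a Ricci-flat, i.e. hyperkähler, limit. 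A volume/Euler-characteristic count (equality in Gauss-Bonnet for the limit, $\chi(M_\infty)$ bounded by $\chi(M)$, and $2\chi=3|\sigma|$) rules out the collapsed and the strictly-negative cases and pins down $M_\infty$ to be a hyperkähler 4-manifold with the same $\chi,\sigma$, hence a finite quotient of K3. The main obstacle is the limiting/compactness step: one must argue that no volume collapse occurs and that the limit is genuinely Einstein rather than merely "Einstein on average", which requires promoting the time-integral decay of $\int|\trRic|^2$ and of $\bar\lambda'$ to pointwise control on a convergent subsequence — this is exactly where the nonsingularity hypothesis (uniform curvature bound, hence parabolic regularity and no loss of derivatives in the limit) and the results imported from \cite{FZZ08-1} do the heavy lifting.
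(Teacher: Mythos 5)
Your argument hinges on importing a monopole class $\mathfrak{s}$ with $c_1^2(\mathfrak{s})>0$ so that Corollary \ref{cor: weighted LeBrun estimate} applies, but no such class exists (nor is one hypothesized) under the equality assumption $2\chi=3|\sigma|$: the theorem assumes only that $M$ is spin with $2\chi=3|\sigma|>0$, and for such manifolds -- e.g.\ the standard K3, which carries a static nonsingular solution -- every monopole class has $c_1^2=0$ (for a symplectic exotic K3 the canonical class satisfies $c_1^2=2\chi+3\sigma=0$), so the strict inequality $c_1^2(\mathfrak{s})>0$ needed for the weighted LeBrun estimate is never available. This is not a repairable technicality: if a class with $c_1^2>0$ did exist, the resulting curvature obstructions would be incompatible with the hyperk\"ahler limit you are trying to produce. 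A second gap is the rigidity extraction: the equality case of Corollary \ref{cor: weighted LeBrun estimate} characterizes metrics that \emph{exactly} saturate the estimate (and its proof uses that $\bar\lambda$ is then a global maximum, forcing a negative K\"ahler--Einstein metric), whereas along your flow you only get asymptotic near-equality; passing from time-averaged decay of $\int|\trRic|^2$ and of $\bar\lambda'$ to the statement that the Cheeger--Gromov limit is Einstein, and then hyperk\"ahler rather than negative K\"ahler--Einstein, is precisely the step your sketch leaves to ``heavy lifting'' and is where the argument as proposed does not close. You also do not address compactness of the limit (a complete noncompact limit must be ruled out) nor the noncollapsing needed to invoke Cheeger--Gromov compactness.

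For contrast, the paper's proof never touches Seiberg--Witten theory: since $M$ is spin with $\sigma\neq 0$, the index theorem (\ref{eqn: index theorem}) supplies ordinary harmonic spinors $\psi_k$ at all times; Lichnerowicz plus the proof of Lemma~3.2 of \cite{FZZ08-1} shows $\minScal(t)\to 0$ under $2\chi=3|\sigma|$, whence $\int_M|\nabla\psi_k|^2\to 0$; noncollapsing follows because collapse with bounded curvature would give an $F$-structure and $\chi=0$; Theorem \ref{thm: lambda-entropy in terms of harmonic spinor} gives $\lambda\leq 0$, which together with \cite{ZZ12} yields $\int_M|\trRic|^2\to 0$, so the Cheeger--Gromov limit is Ricci-flat, compact by Yau's linear volume growth, and then Hitchin's classification (or the limiting parallel spinor) identifies it as a hyperk\"ahler quotient of K3, with torus quotients excluded by $\chi>0$. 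Your Gauss--Bonnet/signature bookkeeping is in the right spirit, but the mechanism must run through harmonic spinors and $\minScal\to 0$, not through a monopole class that the hypotheses do not provide.
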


\begin{proof}
    Let $g(t)$, for $t\geq 0$ be a nonsingular, normalized Ricci flow on $M$. 
    Assume that $\Vol(M,g(t))=1$, which may be achieved by rescaling the metric.
    The flow $g(t)$ is nonsingular, hence there exists a uniform curvature bound. Shi's derivative estimates then imply bounds on all derivatives 
    \begin{equation}
        \sup_{M\times [0,\infty)} |\nabla^k\Rm|<\infty.
    \end{equation}
    
    In particular, the flow cannot collapse: if the solution did collapse, it would do so with bounded curvature, and therefore admit an $F$-structure of positive rank \cite{CG90}. 
    The existence of such a structure implies the manifold has trivial Euler characteristic, which contradicts the assumption that $\chi(M)>0$.
    Thus, the flow has a uniform injectivity radius lower bound
    \begin{equation}
        \inf_{t\geq 0} \mathrm{inj}_{g(t)}(M)>0.
    \end{equation}

    Due to the injectivity radius and curvature bounds, Cheeger-Gromov's compactness theorem (see \cite[Ch.\ 6]{CLN}, for example) implies that for any sequence $(p_k,t_k)\in M\times [0,\infty)$ with $t_k\to \infty$, there exists a subsequence, denoted by the same index, and a complete, pointed Riemannian manifold $(M_{\infty},g_{\infty},p)$ such that
    \begin{equation}
        (M,g(t_k),p_k)\to (M_{\infty},g_{\infty},p)
    \end{equation}
    in the smooth Cheeger-Gromov sense as $k\to \infty$. By the definition of Cheeger-Gromov convergence, the limit manifold has the following properties
    \begin{equation}
        \sup_{M\times [0,\infty)} |\nabla^k\Rm|<\infty,
        \qquad \qquad
        \mathrm{inj}_{g_{\infty}}(M_{\infty})>0, 
        \qquad \qquad 
        \Vol_{g_{\infty}}(M_{\infty})\leq 1.
    \end{equation}
    The remainder of the proof consists of showing that $M_{\infty}$ is compact and hyperk\"ahler. This is achieved by showing that the the harmonic spinors converge to a parallel spinor on the limit.

    Since $M$ is spin and $\sigma\neq 0$, the index theorem (\ref{eqn: index theorem}) implies that the manifold $(M,g(t_k))$ admits a harmonic spinor $\psi_k$ at every time $t_k$. 
    Lichnerowicz' vanishing theorem therefore implies that $\minScal(t)\leq 0$ for all $t$.
    Furthermore, because $M$ satisfies equality in the Hitchin-Thorpe inequality, it follows that $\minScal(t)\to 0$ as $t\to \infty$. Indeed, if this was not the case, then $\minScal(t)\leq -c<0$ for some $c>0$ independent of $t$, and the proof of \cite[Lem.\ 3.2]{FZZ08-1} would imply that 
    \begin{equation}
        2\chi-3|\sigma|\geq \frac{c^2}{96\pi^2}>0,
    \end{equation}
    which violates the assumption that $2\chi=3|\sigma|$.
    Therefore, assuming that the harmonic spinors $\psi_k$ have unit $L^2(g(t_k))$-norm, it follows from the Schr\"odinger-Lichnerowicz formula that
    \begin{equation}\label{eqn: gradient estimate}
        \int_M|\nabla \psi_k|^2\, dV_{g(t_k)} \leq -\frac{1}{4} \minScal(t_k)  \to 0.
    \end{equation}

    Since $\lambda(g(t))\leq 0$ by Theorem \ref{thm: lambda-entropy in terms of harmonic spinor}, it follows that the volume of the unnormalized Ricci flow determined by $g(t)$ cannot go to zero in finite time, and using this combined with the Chern-Gauss-Bonnet theorem, it follows \cite{ZZ12} that, subsequentially,
    \begin{equation}
        \lim_{k\to \infty}\int_M|\trRic|^2\,dV_{g(t_k)}\to 0.
    \end{equation}
    In particular, the limit manifold $(M_{\infty},g_{\infty})$ is Ricci-flat. 
    Since a complete, noncompact manifold with nonnegative Ricci curvature has at least linear volume growth \cite{Y76}, it follows that $M_{\infty}$ must be compact because Cheeger-Gromov convergence implies $\Vol_{g_{\infty}}(M_{\infty})\leq 1$.
    
    In particular, the definition of smooth Cheeger-Gromov convergence implies that for large $k$, there exist diffeomorphisms $\Phi_k:M_{\infty}\to M$ such that 
    \begin{equation}
        \Phi_k^*g(t_k)\xrightarrow{C^{\infty}} g_{\infty}.
    \end{equation}
    Hence, $M_{\infty}$ is diffeomorphic to $M$. 
    To show that $g_{\infty}$ is hyperk\"ahler, it follows from a standard Sobolev space argument using (\ref{eqn: gradient estimate}) that the harmonic spinors $\psi_k$ converge to a parallel spinor of unit $L^2(g_{\infty})$-norm.
    Alternatively, since $g_{\infty}$ is a Ricci-flat metric on a closed 4-manifold which satisfies $2\chi=3|\sigma|$, Hitchin's classification \cite{Hi74b} implies that $(M,g_{\infty})$ is isometric to a hyperk\"ahler metric on a quotient of the flat torus or the K3 surface.
    Since a quotient of a flat torus has trivial Euler characteristic, it follows from the assumption $2\chi=3|\sigma|>0$ that the universal cover $\tilde{M}$ is isometric to a hyperk\"ahler metric on K3. In particular $\tilde{M}$ is diffeomorphic to the standard K3.

\end{proof}

Combining the previous theorem with the results of \cite{FZZ08-1, ZZ12} implies the following sharp parabolic Hitchin-Thorpe inequality.
The characterization of equality is necessary for proving that the normalized Ricci flow on exotic K3s must become singular (Corollary \ref{cor: K3 singularities}).

\begin{corollary}
    [Parabolic Hitchin-Thorpe inequality]\label{cor: parabolic HT}
    If a closed, simply-connected, spin 4-manifold $M$ admits a nonsingular solution $g(t)$ of the normalized Ricci flow, then
    \begin{equation}\label{eqn: HT cor}
        2\chi\geq 3|\sigma|,
    \end{equation}
    with equality if and only if $M$ is diffeomorphic to K3 and $g(t)$ converges subsequentially in the smooth Cheeger-Gromov sense to a hyperk\"ahler metric.
\end{corollary}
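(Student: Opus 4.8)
The plan is to derive the inequality (\ref{eqn: HT cor}) from \cite{FZZ08-1, ZZ12} and extract the equality characterization from Theorem \ref{thm: parabolic Hitchin rigidity}, using simple connectivity of $M$ to rule out both collapsing and nontrivial free quotients of K3. The first thing I would record is what simple connectivity provides: $\chi(M) = 2 + b_2(M) \ge 2 > 0$, so $M$ cannot collapse along a bounded-curvature sequence, since a collapsed limit would carry an $F$-structure of positive rank and hence force $\chi(M) = 0$ by \cite{CG90}. Thus the nonsingular solution $g(t)$ is noncollapsed, and by Shi's derivative estimates together with Cheeger-Gromov compactness, every sequence $g(t_k)$ with $t_k \to \infty$ subconverges smoothly to a metric on a compact manifold diffeomorphic to $M$ --- exactly the compactness step in the proof of Theorem \ref{thm: parabolic Hitchin rigidity}.

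Next I would prove (\ref{eqn: HT cor}). If $\sigma(M) = 0$ then it reads $2\chi \ge 0$ and holds strictly, so only $\sigma(M) \ne 0$ requires work. In that case, since $M$ is spin, the index theorem (\ref{eqn: index theorem}) yields a harmonic spinor at each time, so $\lambda(g(t)) \le 0$ by Theorem \ref{thm: lambda-entropy in terms of harmonic spinor}; this, with the noncollapsing above, is precisely the situation in which \cite{FZZ08-1, ZZ12} deduce $2\chi \ge 3|\sigma|$ via a limiting argument combining the Chern-Gauss-Bonnet and signature formulas with Perelman's monotonicity. Hence (\ref{eqn: HT cor}) holds in all cases.

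Finally, the equality case. Suppose $2\chi = 3|\sigma|$. Since $\chi \ge 2$ this forces $\sigma \ne 0$, so $2\chi = 3|\sigma| > 0$ with $M$ spin, and Theorem \ref{thm: parabolic Hitchin rigidity} applies: along a subsequence $t_k \to \infty$, the metrics $g(t_k)$ converge in the smooth Cheeger-Gromov sense to a hyperk\"ahler metric on a finite quotient $M_\infty = \mathrm{K3}/\Gamma$, which by the noncollapsing above is compact and diffeomorphic to $M$. As K3 is simply connected, $\pi_1(M_\infty) = \Gamma$, so $\pi_1(M) = 1$ forces $\Gamma = 1$; hence $M$ is diffeomorphic to K3 and $g(t)$ converges subsequentially to a hyperk\"ahler metric. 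Conversely, if $M$ is diffeomorphic to K3 then $\chi = 24$ and $\sigma = -16$, so $2\chi = 48 = 3|\sigma|$ and equality holds, and Theorem \ref{thm: parabolic Hitchin rigidity} again supplies the hyperk\"ahler limit; and if $g(t)$ converges subsequentially to a hyperk\"ahler metric, then by noncollapsing the limit is a closed hyperk\"ahler $4$-manifold diffeomorphic to $M$, which being simply connected must be K3 by \cite{Hi74b}, so equality follows once more. The single load-bearing point is the exclusion of collapse via $\chi(M) \ge 2$: it both makes Theorem \ref{thm: parabolic Hitchin rigidity} and \cite{FZZ08-1, ZZ12} applicable and guarantees that the subsequential Cheeger-Gromov limit is a closed manifold diffeomorphic to $M$, which is what transfers the identification ``$M_\infty \cong$ K3'' back to $M$; the substantive content --- the inequality itself and the rigidity of Theorem \ref{thm: parabolic Hitchin rigidity} --- is supplied by \cite{FZZ08-1, ZZ12} and the preceding theorem, so the corollary is essentially an assembly of those inputs under the simplifying hypothesis of simple connectivity.
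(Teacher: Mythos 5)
Your proposal is correct and matches the paper's proof in all essentials: the inequality comes from \cite{FZZ08-1, ZZ12} after using simple connectivity ($\chi\geq 2$) to rule out collapse/positive-rank $F$-structures and the spin index theorem (\ref{eqn: index theorem}) to handle the positive-scalar-curvature scenario, while the equality case is read off from Theorem \ref{thm: parabolic Hitchin rigidity} together with $\pi_1(M)=1$ to kill the finite quotient. The only cosmetic difference is that the paper disposes of the shrinking-soliton alternative of \cite[Thm.\ 1.2]{FZZ08-1} by noting positive scalar curvature forces $\sigma=0$, whereas you split on $\sigma=0$ versus $\sigma\neq 0$ and use the harmonic spinor to force $\lambda(g(t))\leq 0$ before invoking the same limiting results --- contrapositive formulations of the same index-theoretic observation.
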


\begin{proof}
    Since $M$ admits a nonsingular solution to the normalized Ricci flow, \cite[Thm.\ 1.2]{FZZ08-1} implies that (\ref{eqn: HT cor}) holds unless $M$ admits a positive rank $F$-structure, or a shrinking Ricci soliton structure.
    The existence of a positive rank $F$-structure can be excluded on topological grounds since the existence of such a structure implies that $\chi=0$, whereas simply-connectedness implies 
    \begin{equation}\label{eqn: chi at least 2}
        \chi\geq 2.
    \end{equation}
    
    Hence, the only remaining case to consider is the one in which $M$ admits a shrinking Ricci soliton structure. 
    It is well-known that the Riemannian metric of such a structure has positive scalar curvature.
    In particular, the Dirac index of $M$ is trivial, and hence, by the index theorem (\ref{eqn: index theorem}), it follows that $\sigma=0$.
    Combined with (\ref{eqn: chi at least 2}), the inequality (\ref{eqn: HT cor}) therefore holds.
    This proves the first part of the Corollary.
    The equality case follows immediately from Theorem \ref{thm: parabolic Hitchin rigidity}; indeed, the hypothesis $2\chi=3|\sigma|>0$ is satisfied since $M$ is assumed to be simply-connected.
\end{proof}

\begin{remark}
    The hypothesis that $M$ is simply-connected in Corollary \ref{cor: parabolic HT} is necessary; indeed, without this assumption, the equality statement would have to be weakened to account for normalized Ricci flows converging to the flat metric on the 4-torus or quotients thereof. Such a statement would generalize the classical Hitchin-Thorpe inequality for Einstein 4-manifolds with nontrivial fundamental group \cite{Th69, Hi74b}.
\end{remark}

The preceding corollary is useful in proving that singularities of the normalized Ricci flow must exist on many spin 4-manifolds, particularly on those which satisfy \textit{equality} in the Hitchin-Thorpe inequality.
Freedman and Donaldson's classification theorems (see \cite[Ch.\ 5]{Sc05}, for example) imply that the intersection form of any closed, smooth, simply-connected, spin 4-manifold $M$ is given by 
\begin{equation}
    Q_{M}= -2p E_8 \oplus q 
    \begin{bmatrix}
        & 1 \\ 1 &
    \end{bmatrix},
\end{equation}
for some $p,q\in \Z$ with $q\geq 0$, where $E_8$ denotes the $E_8$-matrix.
After reversing orientations, one may assume that $p\geq 0$.
If $q\geq 3p$, then the form $Q_M$ is realized as the intersection form of $pK3\#(q-3p)S^2\times S^2$.
The $11/8$-conjecture states that the condition $q\geq 3p$ is indeed always satisfied, i.e. that $M$ is always homeomorphic to some connected sum of copies of $K3$ and $S^2\times S^2$.

The Euler characteristic and signature of $M$ are given by
\begin{equation}
    \chi = 2+16p + 2q, \qquad \qquad \sigma = -16p.
\end{equation}
In particular, the index (\ref{eqn: index theorem}) of the Dirac operator is equal to $2p$.
Thus, $M$ admits nontrivial harmonic spinors if $p\neq 0$, and consequently, admits no metric of positive scalar curvature if $p\neq 0$.
Furthermore, $2\chi-3|\sigma|=4(1+q-4p)$.
Therefore, $M$ satisfies the Hitchin-Thorpe inequality if and only if $4p\leq q+1$.
It follows that the normalized Ricci flow on $M$ becomes singular if $4p>1+q$, for example if $M$ is homeomorphic to $(r+2)K3 \# rS^2\times S^2$, for any $r\geq 0$.

Equality holds in the Hitchin-Thorpe inequality if and only if $4p=q+1$. Hence the normalized Ricci flow must also become singular if $(p,q)\neq (1,3)$; for example, if $M$ is homeomorphic to $(r+1)K3\#r S^2\times S^2$, for any $r\geq 1$, since these manifolds are not homeomorphic to K3.
The most interesting case, $(p,q)=(1,3)$, namely manifolds homeomorphic to K3, is analyzed below. 

\subsection{Exotic K3s}

Dimension four is special in differential topology: it is the only dimension in which the smooth structure on the Euclidean space $\R^n$ is not unique. That is to say, for any $n\neq 4$, if $M^n$ is a smooth manifold homeomorphic to $\R^n$, then $M^n$ is diffeomorphic to $\R^n$.
On the other hand, $\R^4$ admits uncountably many distinct smooth structures \cite{Ta87}.
Furthermore, dimension four is the only dimension in which a closed, smooth manifold can admit infinitely many distinct smooth structures; in all other dimensions, Kirby-Siebenmann theory implies that a topological manifold admits at most finitely many distinct smooth structures.

An illuminating construction of exotic smooth structures in dimension four is the knot surgery operation on the K3 surface \cite{FS98}; this demonstrates a deep link between knot theory and 4-dimensional differential topology.
To describe this construction, recall that there exists a unique closed, simply-connected, smooth 4-manifold admitting a hyperk\"ahler Riemannian metric. 
The existence of such a metric was proved by Yau and uniqueness follows from the Hitchin-Thorpe inequality \cite{Hi74b}.
The underlying smooth manifold $X$ is called the K3 surface.
It admits a spin structure, and has Euler characteristic $\chi=2$ and signature $\sigma = -16$.

The knot surgery procedure assigns to a knot $K\subset S^3$ a smooth manifold $X_K$ which is homeomorphic, but not diffeomorphic, to $X$. 
The manifold $X_K$ is constructed by replacing the tubular neighborhood $T^2\times D^2\subset X$ of an embedded 2-torus with trivial normal bundle with the complement of a neighborhood $N(K)\subset S^3$ of the knot $K$ times the circle $S^1$
\begin{equation}
    X_K = (X\setminus D^2\times T^2) \cup (S^3\setminus N(K))\times S^1.
\end{equation}
Freedman's theorem implies that $X_K$ is homeomorphic to the standard K3 for any knot $K$.
Remarkably, the Seiberg-Witten invariant of $X_K$ is equivalent to the Alexander polynomial of the knot $K$ \cite{FS98}. 
In particular, for two knots $K,K'$ with distinct Alexander polynomials, the resulting manifolds $X_K$ and $X_{K'}$ have different Seiberg-Witten invariants, and hence are not diffeomorphic.

An important consequence of Theorem \ref{thm: parabolic Hitchin rigidity} is that the normalized Ricci flow must become singular on all exotic K3s, in particular those constructed via knot surgery.

\begin{corollary}\label{cor: K3 singularities}
    Every normalized Ricci flow on an exotic K3 surface becomes singular.
\end{corollary}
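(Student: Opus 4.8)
The plan is to deduce Corollary \ref{cor: K3 singularities} directly from the parabolic Hitchin-Thorpe rigidity already established. Suppose, for contradiction, that some exotic K3 surface $M$ admits a nonsingular solution $g(t)$ of the normalized Ricci flow. By definition, $M$ is homeomorphic to the standard K3 surface $X$, so it shares its characteristic numbers: $\chi(M)=\chi(X)=2$ and $\sigma(M)=\sigma(X)=-16$, whence $2\chi=4$ and $3|\sigma|=48$ — wait, this is the wrong normalization; the correct invariants to track are those appearing in Corollary \ref{cor: parabolic HT}. Since $M$ is homeomorphic to K3 it is simply-connected and spin, and in the intersection-form notation above it corresponds to $(p,q)=(1,3)$, for which $2\chi-3|\sigma|=4(1+q-4p)=4(1+3-4)=0$, i.e.\ $M$ satisfies \emph{equality} in the Hitchin-Thorpe inequality (\ref{eqn: HT cor}).

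First I would invoke Corollary \ref{cor: parabolic HT}: since $M$ is closed, simply-connected, and spin and admits a nonsingular normalized Ricci flow, the inequality (\ref{eqn: HT cor}) holds, and because $M$ realizes equality, the corollary's rigidity clause applies. This tells us that $M$ is diffeomorphic to the standard K3 surface. But $M$ was assumed to be an \emph{exotic} K3, i.e.\ homeomorphic but \emph{not} diffeomorphic to $X$ — a contradiction. Hence no nonsingular normalized Ricci flow exists on $M$, which is precisely the assertion that every normalized Ricci flow on an exotic K3 surface becomes singular.

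The only point requiring care is checking that an exotic K3 genuinely satisfies the hypotheses of Corollary \ref{cor: parabolic HT} and lands in the equality case: simple-connectedness and the spin property are homeomorphism invariants, and $\chi$ and $\sigma$ are homotopy invariants, so all of these transfer from $X$ to $M$; the equality $2\chi=3|\sigma|$ is then the numerical fact $2\cdot 2 \cdot \text{(no)}$ — more precisely, one uses $\chi(X)=2+16p+2q$ and $\sigma(X)=-16p$ with $(p,q)=(1,3)$, giving $\chi=2+16+6=24$ and $\sigma=-16$, so $2\chi=48=3\cdot 16=3|\sigma|$, and indeed $2\chi-3|\sigma|=0$. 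I do not anticipate a genuine obstacle here; the substance of the corollary is entirely contained in Theorem \ref{thm: parabolic Hitchin rigidity} and the cited results of \cite{FZZ08-1, ZZ12}, and the present statement is the immediate topological consequence of distinguishing the smooth structure via that rigidity. The proof is therefore essentially a one-line appeal to Corollary \ref{cor: parabolic HT} together with the definition of an exotic K3.
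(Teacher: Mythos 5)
Your argument is correct and is exactly the paper's intended route: an exotic K3 is simply-connected, spin, and satisfies $2\chi=3|\sigma|$ (via $(p,q)=(1,3)$, i.e.\ $\chi=24$, $\sigma=-16$), so a nonsingular normalized Ricci flow would, by the equality clause of Corollary \ref{cor: parabolic HT} (equivalently Theorem \ref{thm: parabolic Hitchin rigidity}), force $M$ to be diffeomorphic to the standard K3, contradicting exoticness. Your momentary hesitation over $\chi$ traces to the paper's own slip in stating ``$\chi=2$'' for the K3 surface; your corrected value $\chi=24$ is the right one, and the rest of the verification (homeomorphism invariance of simple-connectedness, spin, $\chi$, $\sigma$) is sound.
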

%%%%%%%%%%%%%%%%%%%%%%%%%%%%%%%%%%
\appendix
\section{Appendix}

\begin{proposition}
[Weighted Schr\"odinger-Lichnerowicz for twisted Dirac operators]\label{prop: Weighted Schrodinger-Lichnerowicz for twisted Dirac operators}
Let $D_E$ be a twisted Dirac operator on a twisted spin bundle $\Sigma_E=\Sigma M\otimes E\to M$ and let $f:M\to \R$ be a weight function. 
Then the weighted, twisted Dirac operator $D_{E,f}=e^{f/2}D_E e^{-f/2}$ satisfies the weighted Schr\"odinger-Lichnerowicz formula
\begin{equation}
    D_{E,f}^2=-\Delta_{f}+\frac{1}{4}\Scal_f + \mathfrak{R}^E,
\end{equation}
where $\Delta_f=\Delta-\nabla_{\nabla f}$ is the weighted connection Laplacian on $\Sigma_E$ and $\mathfrak{R}^E$ is the endomorphism of $\Sigma_E$ defined locally by
\begin{equation}
    \mathfrak{R}^E(\psi\otimes u)
    =\frac{1}{2}e_i\cdot e_j \cdot \psi \otimes R^{E}_{e_i,e_j}u.
\end{equation}
\end{proposition}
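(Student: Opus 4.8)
The plan is to reduce the statement to the classical (unweighted) Schr\"odinger-Lichnerowicz formula for twisted Dirac operators together with a direct computation of the conjugation $D_{E,f}=e^{f/2}D_Ee^{-f/2}$. First I would record the standard twisted Lichnerowicz formula $D_E^2=-\Delta+\tfrac14\Scal+\mathfrak{R}^E$, where $\Delta$ is the connection Laplacian on $\Sigma_E=\Sigma M\otimes E$ built from the tensor product connection, and $\mathfrak{R}^E$ is the curvature term displayed in the statement; this is proved exactly as in the untwisted case (see \cite{LM89}), the only change being that the Weitzenb\"ock curvature operator now picks up the $E$-curvature contribution because Clifford multiplication acts only on the $\Sigma M$ factor while $\nabla^E$ contributes the extra term $\tfrac12 e_i\cdot e_j\cdot\psi\otimes R^E_{e_i,e_j}u$. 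The curvature of $\Sigma M$ itself still contributes $\tfrac14\Scal$ via the spinorial Ricci identity.

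Next I would compute the effect of the conjugation at the level of first-order operators: from $D_{E,f}=e^{f/2}D_Ee^{-f/2}$ one gets, just as in (\ref{eqn: weighted Dirac}), the identity $D_{E,f}=D_E-\tfrac12(\nabla f)\cdot$, where Clifford multiplication again acts on the spinor factor only. Squaring this gives $D_{E,f}^2=D_E^2-\tfrac12\{D_E,(\nabla f)\cdot\}+\tfrac14|\nabla f|^2$ (using the Clifford identity (\ref{eqn: Clifford algebra identity}) to simplify $(\nabla f)\cdot(\nabla f)\cdot=-|\nabla f|^2\id$). Then I would expand the anticommutator $\{D_E,(\nabla f)\cdot\}$ using $D_E=e_i\cdot\nabla^{\Sigma_E}_i$ and the Clifford relations: the cross terms produce $-2\nabla^{\Sigma_E}_{\nabla f}$ plus a term involving $\tr\Hess f=\Delta f$, so that $-\tfrac12\{D_E,(\nabla f)\cdot\}=\nabla^{\Sigma_E}_{\nabla f}+\tfrac12(\Delta f)\id$ (this is the same computation as in the untwisted weighted case from \cite[\S 1]{BO1}, since $\nabla f$ is a function times the identity on the $E$-factor). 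Assembling the pieces yields $D_{E,f}^2=-\Delta+\nabla^{\Sigma_E}_{\nabla f}+\tfrac14(\Scal+2\Delta f+|\nabla f|^2)+\mathfrak{R}^E=-\Delta_f+\tfrac14\Scal_f+\mathfrak{R}^E$, using the definitions $\Delta_f=\Delta-\nabla_{\nabla f}$ and (\ref{eqn: defn of weighted scalar curvature}).

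The only real subtlety, and the step I expect to require the most care, is bookkeeping the curvature term: one must verify that in the twisted setting the conjugation by $e^{-f/2}$, which is a pure function, commutes past the $E$-factor and hence does not disturb $\mathfrak{R}^E$ at all — it only interacts with the Clifford/spin structure, exactly as in the untwisted computation. Everything else is a mechanical rearrangement identical to the untwisted weighted Lichnerowicz formula (\ref{eqn: weighted Schr\"odinger-Lichnerowicz}) already used in the paper, so once the twisted Weitzenb\"ock term is correctly identified, the proof is essentially the same argument tensored with $E$. I would therefore structure the write-up as: (i) state the classical twisted Lichnerowicz formula; (ii) compute $D_{E,f}=D_E-\tfrac12(\nabla f)\cdot$ and square it; (iii) simplify the anticommutator via Clifford relations; (iv) collect terms into $\Scal_f$ and $\Delta_f$, noting $\mathfrak{R}^E$ is untouched.
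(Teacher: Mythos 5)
Your overall strategy is correct but genuinely different from the paper's proof. The paper conjugates the \emph{second-order} operator directly: it writes $D_{E,f}^2=e^{f/2}D_E^2e^{-f/2}$, invokes the classical twisted Schr\"odinger-Lichnerowicz formula, and then only has to compute the conjugated Laplacian, $e^{f/2}\Delta e^{-f/2}=\Delta_f-\tfrac{1}{4}\left(2\Delta f-|\nabla f|^2\right)$. With that route the point you single out as the main subtlety is automatic: $\tfrac{1}{4}\Scal$ and $\mathfrak{R}^E$ are zeroth-order endomorphisms, so they commute with multiplication by $e^{\pm f/2}$ and are manifestly untouched. Your route instead works at first order, $D_{E,f}=D_E-\tfrac{1}{2}(\nabla f)\cdot$, and squares, expanding the anticommutator via the Clifford relations; this parallels the untwisted weighted computation of \cite{BO1} and is perfectly viable, at the cost of redoing that anticommutator bookkeeping which the paper's conjugation trick avoids.

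One computational slip needs fixing. Since $(\nabla f)\cdot(\nabla f)\cdot=-|\nabla f|^2\id$, squaring gives $D_{E,f}^2=D_E^2-\tfrac{1}{2}\{D_E,(\nabla f)\cdot\}-\tfrac{1}{4}|\nabla f|^2$, with a \emph{minus} sign on the last term, not the plus sign you wrote. Combining with your (correct) identity $-\tfrac{1}{2}\{D_E,(\nabla f)\cdot\}=\nabla_{\nabla f}+\tfrac{1}{2}(\Delta f)\id$, the scalar terms assemble to $\tfrac{1}{4}\left(\Scal+2\Delta f-|\nabla f|^2\right)$, which is indeed $\tfrac{1}{4}\Scal_f$, because $\Delta_f f=\Delta f-|\nabla f|^2$ turns (\ref{eqn: defn of weighted scalar curvature}) into $\Scal_f=\Scal+2\Delta f-|\nabla f|^2$, as in (\ref{eqn: weighted scalar}). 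As written, your combination $\Scal+2\Delta f+|\nabla f|^2$ with the ordinary Laplacian (the one produced by $\tr\Hess f$) is not $\Scal_f$; you have implicitly conflated $\Delta f$ with $\Delta_f f$, which masked the sign error. Correcting the single sign makes your argument close correctly and yield the stated formula.
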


\begin{proof}
By the classical Schr\"odinger-Lichnerowicz formula for twisted Dirac operators [LM, II Eqn.\ (8.23)],
\begin{align}
    D_{E,f}^2
    =e^{f/2}D_E^2e^{-f/2} 
    % \\ \nonumber
    =e^{f/2}\left(-\Delta+\frac{1}{4}\Scal +\mathfrak{R}^E\right)e^{-f/2}
    % \\\nonumber
    =-e^{f/2}\Delta e^{-f/2}+\frac{1}{4}\Scal +\mathfrak{R}^E.
\end{align}
Using the compatibility of the twisted connection with the metric, it is straightforward to check that
\begin{equation}
    e^{f/2}\Delta e^{-f/2} 
    = \Delta_f-\frac{1}{4}(2\Delta f-|\nabla f|^2)
    =\Delta_f -\frac{1}{4}(\Scal_f-\Scal).
\end{equation}
Combined with the previous equation, this concludes the proof.
\end{proof}

%%%%%%%%%%%%%%%%%%%%%%%%%%%%%%%%%%
% \newpage
{\small

}

\newcommand{\Addresses}{{% additional braces for segregating \footnotesize
  \bigskip
  \bigskip
  \small
    \textsc{Department of Mathematics}\par\nopagebreak
    \textsc{Massachusetts Institute of Technology}\par\nopagebreak
    \textsc{77 Massachusetts Avenue} \par\nopagebreak
    \textsc{Cambridge, MA 02139} 
    
    \medskip
    \medskip
    
    \textit{Correspondence to be sent to:} \texttt{juliusbl@mit.edu}

}}

\Addresses

\end{document}